
\pdfoutput=1

\documentclass[a4paper]{article}

\usepackage[shortlabels]{enumitem}
\usepackage[colorlinks,
linkcolor=blue,
anchorcolor=blue,
citecolor=blue]{hyperref}
\usepackage{array}
\usepackage{multirow}
\usepackage{amsmath}
\usepackage{amssymb}
\usepackage{stmaryrd}
\usepackage{graphicx}
\usepackage{wasysym}
\usepackage{diagbox}
\usepackage{lipsum}
\usepackage{amsthm}
\usepackage{float}
\usepackage{hyperref}
\usepackage{graphicx}
\usepackage{placeins}
\usepackage{makecell}
\usepackage{xr}

\theoremstyle{definition}
\newtheorem{thm}{\bf Theorem}

\newtheorem{defn}{\bf Definition}

\newtheorem{prop}{\bf Proposition}
\newtheorem{cor}{\bf Corollary}
\newtheorem{rem}{\bf Remark}
\newtheorem{obs}{\bf Observation}

\externaldocument[S-]{supplementary-Material}
\usepackage[english]{babel}


\begin{document}

\title{On Rigid Origami \uppercase\expandafter{\romannumeral2}: Quadrilateral Creased Papers}

\author{
	Zeyuan He$^{1}$, Simon D. Guest$^{2}$}

\author{Zeyuan He, Simon D. Guest\footnote{zh299@cam.ac.uk, sdg@eng.cam.ac.uk. Department of Engineering, University of Cambridge, Cambridge CB2 1PZ, United Kingdom.}}
\maketitle

\begin{abstract}
\noindent Miura-ori is well-known for its capability of flatly folding a sheet of paper through a tessellated crease pattern made of repeating parallelograms. Many potential applications have been based on the Miura-ori and its primary variations. Here we are considering how to generalize the Miura-ori: what is the collection of rigid-foldable creased papers with a similar quadrilateral crease pattern as the Miura-ori? This paper reports some progress. We find some new variations of Miura-ori with less symmetry than the known rigid-foldable quadrilateral meshes. They are not necessarily developable or flat-foldable, and still only have single degree of freedom in their rigid folding motion. This article presents a classification of the new variations we discovered and explains the methods in detail.
\end{abstract}

\begin{small}
	{$\; \; \,$ \textbf{Keywords}:} rigid-foldable, folding, degree-4, Kokotsakis quadrilateral
\end{small}


	
\section{Introduction}

The most broadly studied rigid origami tessellation is the Miura-ori \cite{miura_method_1985-2} introduced by Prof. Kyoto Miura as an efficient packing method for large membranes in space, as shown in figure \ref{fig: miura}. It has a periodical crease pattern, where each unit cell consists of a degree-4 vertex connecting four parallelograms. Miura-ori can be folded where all deformation is concentrated on the rotation of perfectly rigid panels around creases. This property is called the \textit{rigid-foldability}. The \textit{rigid folding motion} starts from the planar state to the flat-folded state where, globally, there is an in-plane shrinkage deformation. Up to now there has been many successful applications inspired by the rigid-foldability of Miura-ori and its primary variations, such as a lithium-ion battery that can also be largely folded, bended and twisted \cite{song_origami_2014}; a flat-foldable corrugated vault used as transformable architecture that connects two
existing buildings \cite{tachi_freeform_2010}; a compliant mechanism used for energy
absorption and impact force distribution \cite{tolman_elastic_2017}, etc. Encouraged by these applications, we are motivated to explore more rigid-foldable variations of the Miura-ori with similar crease patterns consisting of quadrilaterals. Hence here we raise a question: What is the collection of rigid-foldable creased papers with a similar quadrilateral crease pattern as the Miura-ori? This article makes some progress on classifying such rigid-foldable quadrilateral creased papers by introducing new tools on the study of rigid-foldability. We believe that this theoretical progress will cast light on more engineering applications where transformable piecewise-rigid structures are needed.

\begin{figure} [!tb]
	\noindent \begin{centering}
		\includegraphics[width=1\linewidth]{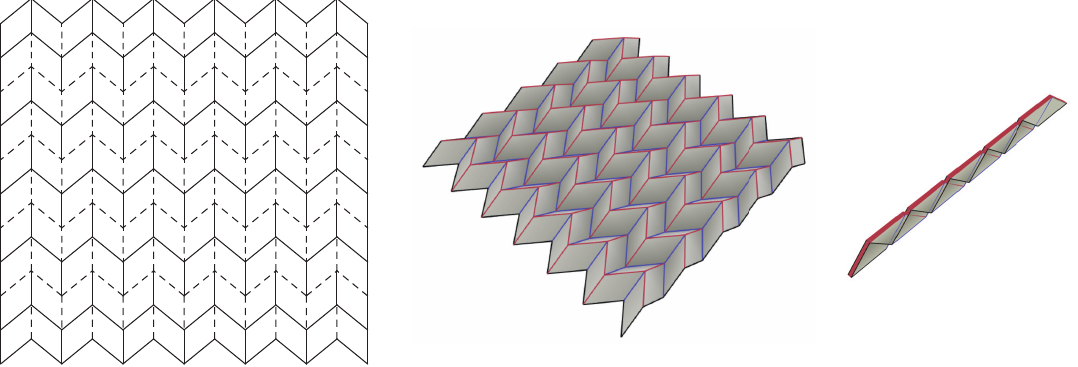}
		\par \end{centering}
	
	\caption{\label{fig: miura}We show the crease pattern, an intermediate rigidly folded state, and the flat-folded state of a Miura-ori. Near each vertex, there are three mountain folds (coloured red or solid lines) and one valley fold (coloured blue or dashed lines), or vice versa. The rigid folding motion of a Miura-ori is an in-plane shrinkage deformation, which is plotted by an open source software called Freeform Origami \cite{tachi_generalization_2009}.}
\end{figure}

There are a number of previous results concerning rigid-foldable quadrilateral creased papers. The majority of them focus on a \textit{developable} (can be folded planar) and \textit{flat-foldable} (can be folded flat) quadrilateral creased paper, which is an important subset of the quadrilateral creased papers considered here. Tachi \cite{tachi_generalization_2009} first described the sufficient and necessary condition for a developable and flat-foldable quadrilateral creased paper to be rigid-foldable, and some other papers followed with further examples and design methods \cite{evans_rigidly_2015, lang_rigidly_2018}. If we go out of this subset, \cite{tachi_freeform_2010} introduced a specific type of non-developable and flat-foldable quadrilateral creased paper, while \cite{stachel_remarks_2011, lang_twists_2017, dieleman_jigsaw_2020} introduced some developable and non-flat-foldable quadrilateral creased papers. Here our target is to provide a full description of rigid-foldable quadrilateral creased papers, without any restriction on the developability or flat-foldability. 


In the first paper of this series \cite{he_rigid_2019}, we presented more details for all necessary definitions used in the current paper, and we mentioned an interesting topic called \textit{generic rigid-foldability}: given a crease pattern $C$, whether (almost) all possible creased papers with crease pattern isomorphic to $C$ are rigid-foldable. The crease pattern of the Miura-ori turns out to be generically rigid, since there exists a first-order rigid quadrilateral creased paper. What we have done in this paper is exactly to find the non-generic quadrilateral creased papers that are rigid-foldable, which is challenging due to the complicated compatibility conditions over the whole creased paper.

To find variations of Miura-ori without strong symmetry, our method is to apply the result in \cite{izmestiev_classification_2016}, where a nearly complete classification of rigid-foldable $3 \times 3$ quadrilateral mesh (or called Kokotsakis quadrilateral) is presented, and the constraints on sector angles are given symbolically. Since a large quadrilateral mesh is rigid-foldable if and only if its restriction on each $3 \times 3$ mesh is rigid-foldable, by considering proper stitching of these Kokotsakis quadrilaterals it would be possible to find all analytical variations of a large rigid-foldable quadrilateral mesh. Although our generalization is still incomplete, this stitching method is shown effective and has enabled new analytical design of rigid-foldable quadrilateral meshes. Further, this stitching method has the potential of being applied to other crease patterns or origami structures with each inner vertex degree-4 but each inner panel not quadrilateral, such as the Kagome pattern, origami stacked metamaterials consisting of several layers \cite{schenk_geometry_2013}, origami ring structures \cite{tachi_rigid-foldable_2012}, etc.

From the next section we will start to discuss the collection of rigid-foldable quadrilateral creased papers. In order to present further results some preliminaries are needed.

\section{Preliminaries}

\subsection{Quadrilateral and degree-4 single creased paper}

We first present some definitions related to a quadrilateral creased paper.

\begin{defn} \label{defn: quad}
	A graph is \textit{isomorphic} to another graph if they have the same number of vertices which are also connected in the same way. The detailed definition of a \textit{creased paper} is provided in \cite{he_rigid_2019}, roughly speaking, it is a polygonal mesh in $\mathbb{R}^3$ that allows contact of different parts but does not allow self-intersection. A \textit{quadrilateral} creased paper is a creased paper with a crease pattern isomorphic to the $m \times n$ ($m,n$ are positive integers) planar quadrilateral mesh shown in figure \ref{fig: introduction}(a). A vertex or crease is called \textit{inner} if it is not on the boundary of a creased paper, otherwise \textit{outer}. A panel (face of a creased paper) is called \textit{inner} if none of its vertices is on the boundary of a creased paper, otherwise \textit{outer}.
\end{defn}

\begin{rem}
	When talking about isomorphism we do not consider the correspondence of mountain-valley assignment, since for a general quadrilateral mesh the mountain-valley assignment is not applicable to distinguish different branches of rigid folding motion, which will be explained at the end of this section. Further, a creased paper is neither necessarily developable or flat-foldable. A \textit{developable} creased paper has a \textit{planar} rigidly folded state where all the folding angles are zero. A \textit{flat-foldable} creased paper has a \textit{flat} rigidly folded state where all the folding angles are $\pm \pi$.
\end{rem}

\begin{rem} \label{rem: not including ring}
	Apart from the creased paper mentioned in Definition \ref{defn: quad}, there might be other creased papers with a crease pattern isomorphic to other quadrilateral meshes. An example is shown in figure \ref{fig: introduction}(c). In this "ring" structure the rigid-foldability condition is more complicated than the quadrilateral creased paper studied here. When generalizing the stacking \cite{schenk_geometry_2013} or cylindrical \cite{tachi_rigid-foldable_2012} structures that are widely used for meta-materials, the analysis of such ``ring'' structure is indeed necessary. It would be discussed in a future article in this series.
\end{rem}

\begin{figure} [!tb]
	\noindent \begin{centering}
		\includegraphics[width=1\linewidth]{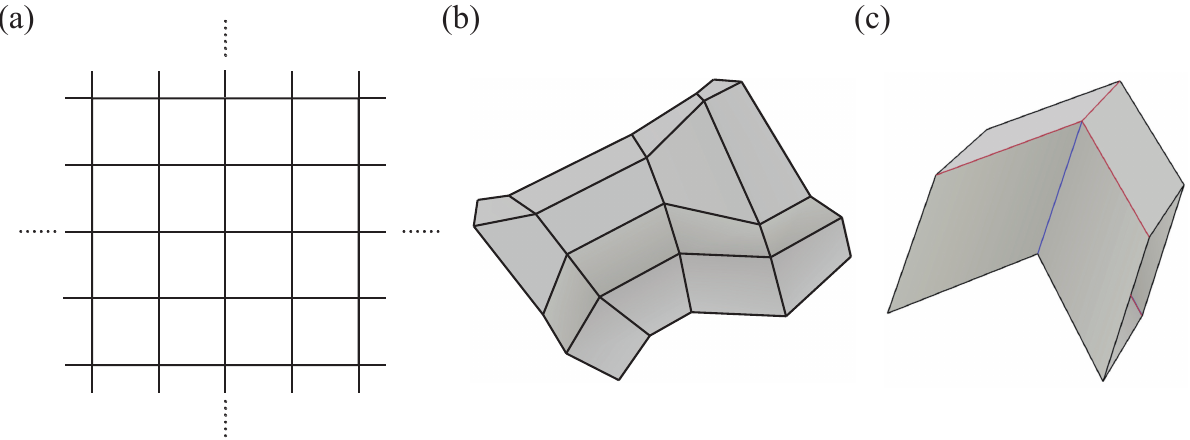}
		\par \end{centering}
	
	\caption{\label{fig: introduction}(a) shows a $m \times n$ ($m,n$ are positive integers) planar quadrilateral mesh, and our object of study is a creased paper with a crease pattern isomorphic to (a). A quadrilateral creased paper is not necessarily developable or flat-foldable, and in (b) a non-developable and non-flat-foldable quadrilateral creased paper is shown. (c) is an example of a ``ring'' structure mentioned in Remark \ref{rem: not including ring}, plotted by Freeform Origami \cite{tachi_generalization_2009}. The mountain and valley creases are coloured red and blue. Such system will not be discussed in this article.}
\end{figure}

In order to study the rigid-foldability of a quadrilateral creased paper, we first need to clarify the relation among the folding angles around its each single-vertex. The restriction of a creased paper on a degree-4 vertex and its adjacent panels is called a degree-4 \textit{single-vertex creased paper}. As shown in figure \ref{fig: degree-4 vertex}, a single-vertex creased paper has four sector angles $\alpha_1,\alpha_2,\alpha_3,\alpha_4$ and four folding angles $\rho_1,\rho_2,\rho_3,\rho_4$. We provide a complete result for the relation among the folding angles of a degree-4 single-vertex creased paper in Section \ref{S-section: s1} of the supplementary material, where all the relations are expressed in terms of $\tan \dfrac{\rho_i}{2}$ in a compact form. Note that the sector angles are assumed in $0 < \alpha_i < \pi$. This is reasonable because if a sector angle equals to $\pi$ (without loss of generality, suppose $\alpha_1=\pi$), then $\alpha_2+\alpha_3+\alpha_4 \ge \pi$;
\begin{enumerate} [(1)]
	\item if $\alpha_2+\alpha_3+\alpha_4 = \pi$, $\rho_1=\rho_4$; $\rho_2 = \rho_3 = 0$.
	\item otherwise, $\rho_2, \rho_3 = \{a,b\}$ or $\{-a,-b\}$, where $a,b$ are non-zero constants.
\end{enumerate}
both of which can be regarded as folding along a crease. Additionally, if a sector angle is greater than $\pi$ (without loss of generality, suppose $\alpha_1>\pi$), there is no essential difference if substituting $\alpha_1$ with $2\pi-\alpha_1$, which means for a set of sector angles $0 < \alpha_i < \pi$, switching $\alpha_i$ to $2\pi-\alpha_i$ will only result in greater possibility of self-intersection of panels.

\begin{figure} [!tb]
	\noindent \begin{centering}
		\includegraphics[width=1\linewidth]{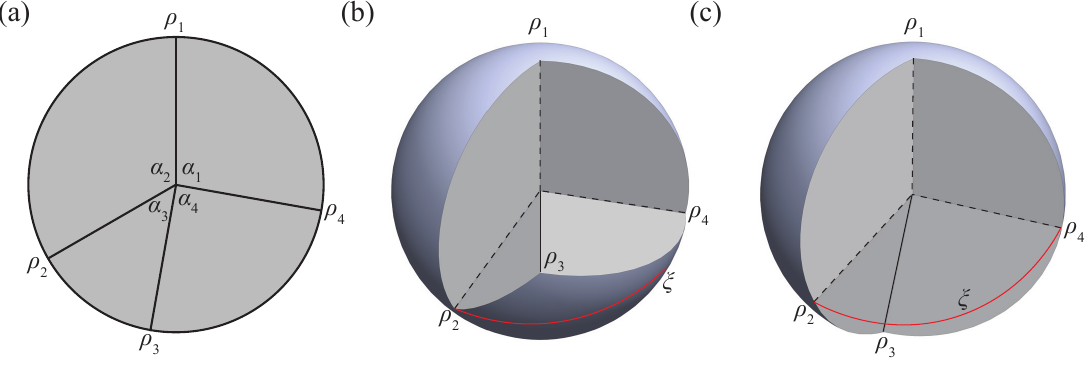}
		\par \end{centering}
	
	\caption{\label{fig: degree-4 vertex}(a) A degree-4 single-vertex creased paper. We label the sector angles counterclockwisely. (b) and (c) show two non-trivial rigidly folded states with the outside edges of the single-vertex creased paper drawn on a sphere as arcs of great circles, assuming the panel corresponding to $\alpha_1$ is fixed when changing the magnitude of $\rho_1$. The mountain and valley creases are shown in solid and dashed lines. Generically, for a $\rho_1$ there are two sets of folding angles $\rho_2, \rho_3, \rho_4$, where the panels corresponding to $\alpha_3$ and $\alpha_4$ are symmetric to $\xi$.}
\end{figure}

\subsection{Rigid-foldability of a quadrilateral creased paper}

Here we will introduce the condition (also called the loop condition \cite{tachi_generalization_2009}) for a quadrilateral creased paper to be rigid-foldable. We start with a special case, where each inner vertex is degree-4, but there is no cycle in the interior of the crease pattern.

\begin{defn}
	In graph theory, a \textit{forest} is a disjoint union of trees. A \textit{tree} is an undirected graph in which any two vertices are connected by exactly one path.
\end{defn}

\begin{prop} \label{thm: no panel}
	(theorem 8 in \cite{he_rigid_2019}) If a creased paper satisfies (figure \ref{fig: no inner panel}(a) and \ref{fig: no inner panel}(b)):
	\begin{enumerate} [(1)]		
		\item The interior of crease pattern is a forest.
		\item The restriction of a rigidly folded state on each single creased paper is rigid-foldable.
	\end{enumerate} 
	then this rigidly folded state is generically rigid-foldable. Especially, if the creased paper described above is also developable, the planar folded state is rigid-foldable.
\end{prop}

\begin{figure} [!tb]
	\noindent \begin{centering}
		\includegraphics[width=1\linewidth]{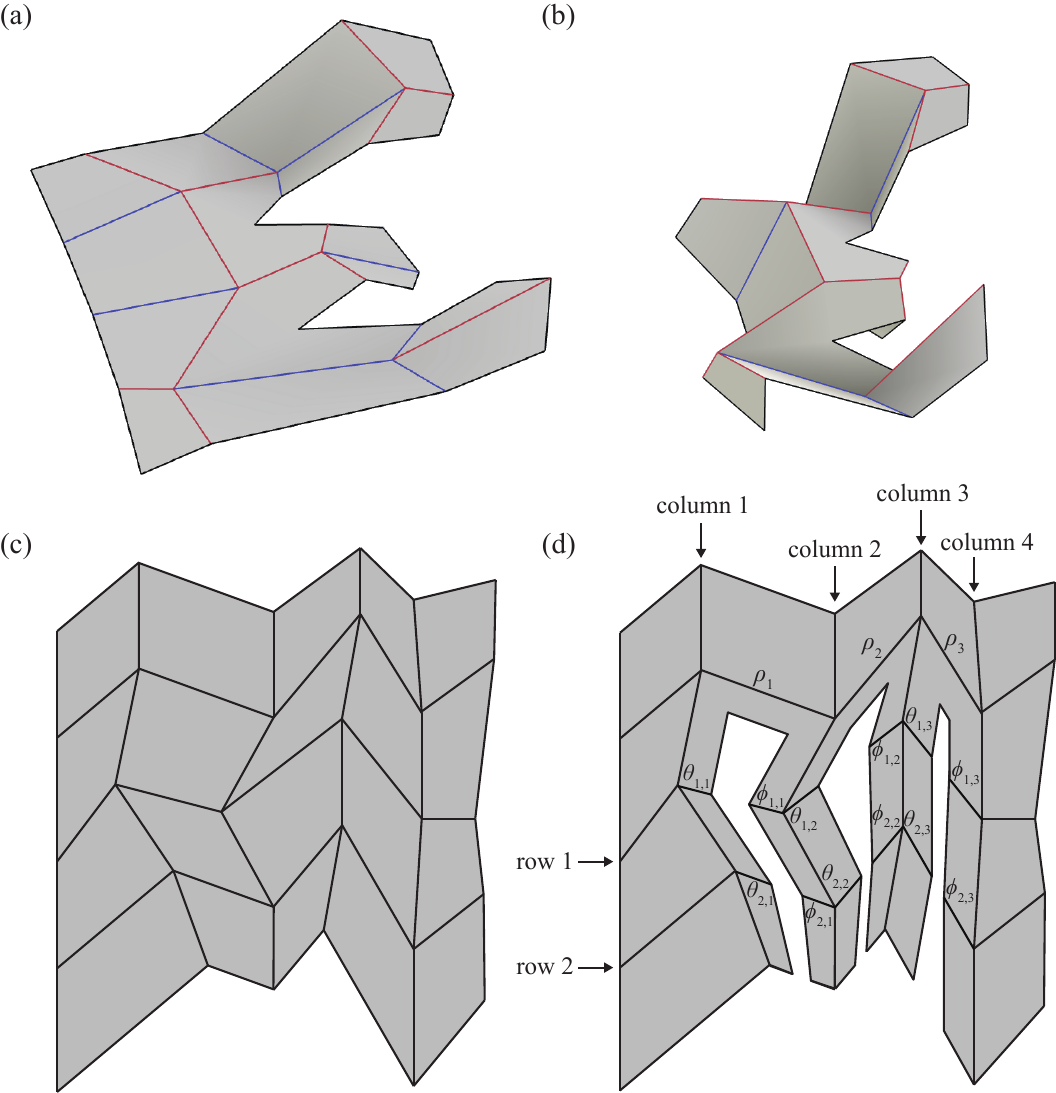}
		\par \end{centering}
	
	\caption{\label{fig: no inner panel}(a) An example of the creased paper described in proposition \ref{thm: no panel}. (b) shows a rigidly folded state of (a), where some panels clash and the rigid folding motion halts. The mountain and valley creases are coloured red and blue. (a) and (b) are plotted by Freeform Origami \cite{tachi_generalization_2009}. Given a quadrilateral creased paper in (c), (d) demonstrates its corresponding decoupled creased paper, where we ``cut'' inner creases connecting adjacent columns to make the interior of crease pattern have no cycle. $\rho_j$ are the folding angles of the top row.}
\end{figure}

Regardless of the details on genericity, it is reasonable that the decoupled crease pattern mentioned in proposition \ref{thm: no panel} can generate rigid-foldability. In the next proposition we will show that the rigid-foldability of the creased paper above is closely related to the rigid-foldability of a quadrilateral creased paper. 

\begin{prop} \label{thm: rigid-foldability}	
	Consider a quadrilateral creased paper (figure \ref{fig: no inner panel}(c)). If and only if
	\begin{enumerate} [(1)]
		\item the corresponding decoupled creased paper (figure \ref{fig: no inner panel}(d)) is rigid-foldable.
		\item for all $i, j$, the following equation is satisfied simultaneously in a closed interval
		\begin{equation} \label{eq: blanket}
		\theta_{i,j} \equiv \phi_{i,j}
		\end{equation}
	\end{enumerate}
	then the quadrilateral creased paper is rigid-foldable. Figure \ref{fig: no inner panel}(d) defines the folding angles $\theta_{i,j}$ and $\phi_{i,j}$.
\end{prop}

\begin{proof}
	As it is straightforward to move between the quadrilateral creased paper and the tree structure by cutting or gluing, the sufficiency and necessity are natural.
\end{proof}

From the analysis above, the rigid folding motion of a quadrilateral creased paper is the same as the decoupled creased paper once proposition \ref{thm: rigid-foldability} is satisfied. Based on the information in Section \ref{S-section: s1} of the supplementary material, the rigid folding motion of a degree-4 single-vertex creased paper can generically be parametrized by a single variable, hence the rigid folding motion of a quadrilateral creased paper can also generically be parametrized by a single variable. We can calculate the \textit{degree of freedom} algebraically from the dimension of the tangent space of a given configuration. For a quadrilateral creased paper, the degree of freedom is generically 1, but might be greater than 1 at some special points in the configuration space, such as the planar and flat rigidly folded states. 

A quadrilateral creased paper may have a number of branches of rigid folding motion. For a developable quadrilateral creased paper, we can distinguish different branches of rigid folding motion by different mountain-valley assignments. This is because, for each branch of every degree-4 single-vertex creased paper, the sign of every folding angle remains the same when the rigid folding motion moves away from planar. Each inner vertex should be incident to three mountain creases and one valley crease, or three valley creases and one mountain crease. However, the mountain-valley assignment for different branches of rigid folding motions of a non-developable quadrilateral creased paper does not satisfy the rule above. For example, in a non-developable degree-4 single-vertex creased paper there might be four mountain creases or two mountain and two valley creases (figure \ref{fig: switching a strip}).

In next section we will start constructing more general rigid-foldable quadrilateral creased papers, which is done by "stitching" together rigid-foldable 3 $\times$ 3 quadrilateral meshes.

\section{Stitching rigid-foldable Kokotsakis quadrilaterals} \label{sec: problem setting}

\begin{figure} [!tb]
	\noindent \begin{centering}
		\includegraphics[width=0.9\linewidth]{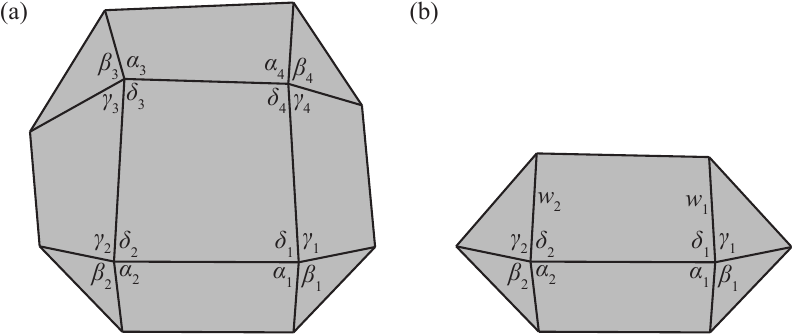}
		\par \end{centering}
	
	\caption{\label{fig: kokotsakis sm}(a) A Kokotsakis quadrilateral. We label these sector angles as $\alpha_j, \beta_j, \gamma_j, \delta_j~(1 \le j \le 4)$. (b) A linear unit that will be introduced in Section \ref{subsection: linear 1}. It is half of a Kokotsakis quadrilateral and the tangent of half of the folding angles satisfies $w_1=cw_2$, $c$ is a constant. If two linear units have the same $c$, they could be stitched together and form a rigid-foldable Kokotsakis quadrilateral. }
\end{figure}

\begin{defn}
	A \textit{Kokotsakis quadrilateral} is a polyhedral surface in $\mathbb{R}^3$, which consists of one centre quadrilateral (the base); four side quadrilaterals, one attached to each side of this centre quadrilateral; and four corner quadrilaterals placed between each two outer consecutive side quadrilaterals (figure \ref{fig: kokotsakis sm}(a)).
\end{defn}

From proposition \ref{thm: rigid-foldability}, the next corollary is natural.

\begin{cor} \label{cor: condition}
	A quadrilateral creased paper is rigid-foldable if and only if the restriction of a rigidly folded state on each Kokotsakis quadrilateral is rigid-foldable.
\end{cor}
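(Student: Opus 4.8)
\emph{Proposal.} The plan is to obtain the corollary as a local version of Theorem \ref{thm: rigid-foldability}. First I would make precise the decomposition into Kokotsakis quadrilaterals. When the tree structure of Figure \ref{fig: blanket}(b) is produced by cutting every inner crease joining two adjacent columns except those in the top row, exactly one crease is cut for each elementary four-cycle of inner vertices of the mesh; and the four vertices of such a cycle, together with the inner panel having them as its corners and the eight further panels incident to those vertices, form precisely one Kokotsakis quadrilateral with that panel as its base. Thus the equations $\theta_{i,j}\equiv\phi_{i,j}$ of Theorem \ref{thm: rigid-foldability} are in bijection with the Kokotsakis quadrilaterals, and these quadrilaterals cover the whole creased paper.

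For the forward implication, suppose the quadrilateral creased paper is rigid-foldable. A Kokotsakis quadrilateral is itself a quadrilateral creased paper in the sense of Definition \ref{defn: quad}: a $3\times 3$ block of panels has Euler characteristic $1$, and its only inner creases are the four edges of the base, no row or column of which forms a cycle. Restricting a rigid folding motion of the whole paper to the nine panels of a given Kokotsakis quadrilateral yields a continuous family of folding angles starting from the prescribed rigidly folded state that inherits the absence of self-intersection from the ambient motion, so each Kokotsakis quadrilateral is rigid-foldable. (If one insists that the restricted motion be non-trivial, it suffices that the ambient motion have no constant folding angle, as in requirement (a) of the abstract.)

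For the converse, apply Theorem \ref{thm: rigid-foldability} to a single Kokotsakis quadrilateral: cutting the one base edge identified above turns it into a tree structure that is generically rigid-foldable by Proposition \ref{thm: no panel}, so by Theorem \ref{thm: rigid-foldability} the Kokotsakis quadrilateral is rigid-foldable exactly when the corresponding single equation $\theta_{i,j}\equiv\phi_{i,j}$ holds. Hence, if every Kokotsakis quadrilateral is rigid-foldable from the common restricted state, all the equations $\theta_{i,j}\equiv\phi_{i,j}$ hold; and since the full tree structure of the paper is again rigid-foldable by Proposition \ref{thm: no panel}, both hypotheses of Theorem \ref{thm: rigid-foldability} are met and the whole quadrilateral creased paper is rigid-foldable.

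I expect the only real difficulty to be bookkeeping about the closed intervals, not anything structural. Theorem \ref{thm: rigid-foldability} delivers a rigid folding motion only on the interval where the tree is rigid-foldable and all loop conditions hold simultaneously, so I must ensure that the separate intervals coming from the individual Kokotsakis quadrilaterals share a common non-degenerate subinterval through the prescribed state, rather than meeting only at that state. This is exactly why the statement is phrased via \emph{the restriction of a rigidly folded state}: one fixes a single rigidly folded state of the paper as a basepoint, propagates folding angles along the one-parameter tree motion, and notes that each difference $\theta_{i,j}-\phi_{i,j}$ is analytic in that parameter, so its vanishing on any subinterval through the basepoint forces it to vanish on the entire tree interval. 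The reversibility of cutting and gluing a crease, and the preservation of rigid folding motions under it, are the same routine facts already used in the proof of Theorem \ref{thm: rigid-foldability}.
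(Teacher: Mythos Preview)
Your proposal is correct and follows exactly the route the paper intends: the paper states the corollary as ``natural'' from Theorem~\ref{thm: rigid-foldability} with no further argument, and your plan of identifying the loop conditions $\theta_{i,j}\equiv\phi_{i,j}$ with the individual Kokotsakis quadrilaterals is precisely the intended unpacking. Your extra care about common intervals and analyticity goes beyond what the paper makes explicit, but is in the same spirit.
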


Now we explain the process of ``stitching''.

\begin{defn}
	For a Kokotsakis quadrilateral $\rm{Q}_1$, another Kokotsakis quadrilateral $\rm{Q}_2$ can \textit{stitch} with $\rm{Q}_1$ in the longitudinal or transverse direction if $\rm{Q}_2$ can share two vertices with $\rm{Q}_1$ in the longitudinal or transverse direction.
\end{defn}

We provide a nearly complete result on the classification of all rigid-foldable Kokotsakis quadrilaterals \cite{izmestiev_classification_2016} in Section \ref{S-section: s2} of the supplementary material. Each type here is described by a system of equations (most of them are trigonometric, some are exponential or elliptic) on its sector angles. The number of equations $m$ is less than 16, therefore this system of equation can possibly be solved numerically with $16-m$ random input sector angles. However, there are three concerns. First, there might be no solution for sector angles in the real field. Second, even if there is a solution in the real field, the rigid folding motion may locates in the complexified configuration space, hence the Kokotsakis quadrilateral will still be rigid in $\mathbb{R}^3$. Third, even if there is a solution in the real field and there is a rigid folding motion in $\mathbb{R}^3$, the panels may constantly self-intersect, which makes the rigid folding motion not valid.

Then, given any quadrilateral creased paper, we can interpret it by stitching together rigid-foldable Kokotsakis quadrilaterals, either from the same or different types. In other words, if all possible stitching of rigid-foldable Kokotsakis quadrilaterals could be found, we will get the complete description of a rigid-foldable quadrilateral creased paper. In order to grasp the "non-trivial" generalization of the Miura-ori, two restrictions for our object of study are listed below.

\begin{enumerate} [(1)]
	\item There is at least one non-trivial rigid folding motion.

\begin{defn} \label{defn: trivial}
	Following the terminology introduced in \cite{izmestiev_classification_2016}, A rigid folding motion of a creased paper is called \textit{trivial} if a folding angle remains constant during this rigid folding motion. A \textit{trivial} rigid-foldable Kokotsakis quadrilateral only has trivial rigid folding motion.
\end{defn}

We think this requirement is reasonable because when designing a creased paper, a crease that does not participate in the folding seems redundant. If a large quadrilateral creased paper contains a trivial rigid-foldable Kokotsakis quadrilateral, any rigid folding motion of this large quadrilateral creased paper will be trivial, therefore the trivial rigid-foldable Kokotsakis quadrilaterals will not be considered (Type 7 in Section \ref{S-section: s2} of the supplementary material).\\

\item The sector angles can be solved quadrilateral by quadrilateral, i.e. the design of the entire quadrilateral creased paper never requires the solution of equations where variables are sector angles from more than one Kokotsakis quadrilateral.\\	
	
We raise this requirement mainly because even if a quadrilateral creased paper is designed from possible stitchings, generically the number of total constraints will be much more than the number of variables as the size increases. A compromise is making the design process to be as follows. Start from a Kokotsakis quadrilateral. Set its sector angles as known variables and solve the sector angles of the neighbouring Kokotsakis quadrilaterals, quadrilateral by quadrilateral. Continue doing this until all the sector angles are solved. Two advantages of this requirement are:

\begin{enumerate} [(a)]
	\item each step completed by the above process will result in a rigid-foldable creased paper.
	\item when adding a new Kokotsakis quadrilateral, we don't need to adjust the solved part because there is enough design freedom.
\end{enumerate}

\end{enumerate}

In the next sections, we will demonstrate several methods to construct a large rigid-foldable quadrilateral creased paper satisfying the two additional requirements above. We understand that this method will not be comprehensive, i.e. there might be rigid-foldable quadrilateral creased papers whose sector angles can only be solved as a whole, but it is unlikely practicable to find such cases.

\section{Two operations that preserve the rigid-foldability} \label{sec: operation}

Before presenting the catalogue, we provide two operations that preserve the rigid-foldability of a quadrilateral creased paper, named as ``switching a strip'' and "adding a parallel strip".

\subsection{Switching a strip}

\begin{figure} [!tb]
	\noindent \begin{centering}
		\includegraphics[width=1\linewidth]{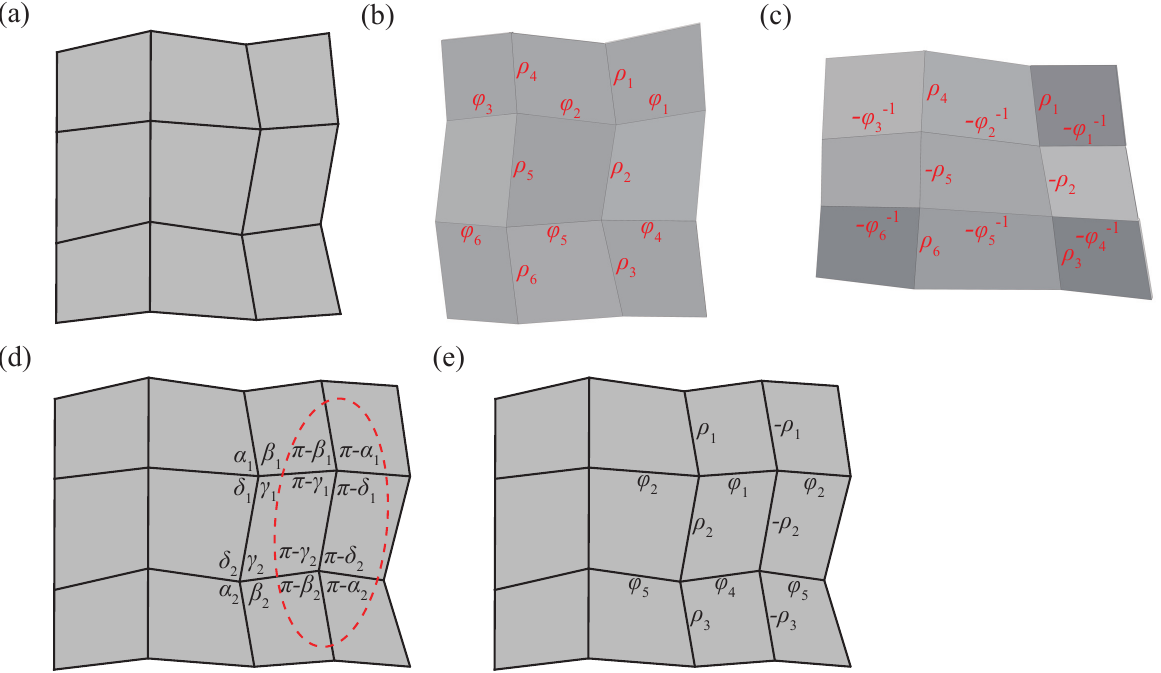}
		\par \end{centering}
	
	\caption{\label{fig: switching a strip}Here we show examples for switching a strip and adding a parallel strip. (a) shows a rigid-foldable and developable quadrilateral creased paper, (b) is a rigidly folded state of (a). We use $\{\rho_1,\rho_2,\rho_3,\rho_4,\rho_5,\rho_6,\phi_1,\phi_2,\phi_3,\phi_4,\phi_5,\phi_6\}$ to represent the tangent of half of the folding angles on these labelled inner creases. In (c), the sector angles on panels of the middle row are replaced by their complements to $\pi$, and after switching a strip $\{\rho_1,\rho_2,\rho_3,\rho_4,\rho_5,\rho_6\} \rightarrow \{\rho_1,-\rho_2,\rho_3,\rho_4,-\rho_5,\rho_6\}$, $\{\phi_1,\phi_2,\phi_3,\phi_4,\phi_5,\phi_6\} \rightarrow \{-\phi_1^{-1},-\phi_2^{-1},-\phi_3^{-1},-\phi_4^{-1},-\phi_5^{-1},-\phi_6^{-1}\}$. Generally, switching a strip will make a developable creased paper non-developable. (d) shows how a parallel strip is added, which is demonstrated with a dashed cycle. In (e), the magnitude of new folding angles are illustrated.}
\end{figure}

\begin{defn} \label{defn: switching}
	\textit{Switching a strip} means to replace all the sector angles on a column or a row of panels by their complements to $\pi$, which is extended from the definition of switching a boundary strip in \cite{izmestiev_classification_2016} (figure \ref{fig: switching a strip}(a--c)).
\end{defn}

\begin{thm} \label{thm: switching}
	Switching a strip preserves the rigid-foldability of a quadrilateral creased paper.
\end{thm}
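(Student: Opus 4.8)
The plan is to reduce the theorem to a local statement at each inner vertex and then propagate it along the strip. By Corollary~\ref{cor: condition} together with Theorem~\ref{thm: rigid-foldability}, a quadrilateral creased paper is rigid-foldable on a closed interval exactly when the folding angles around every inner vertex satisfy the local relations of the corresponding degree-4 single-vertex creased paper and, simultaneously, the closing conditions $\theta_{i,j}\equiv\phi_{i,j}$ of Figure~\ref{fig: blanket} hold throughout that interval. Switching a strip (Definition~\ref{defn: switching}) is a \emph{local} modification of sector angles: around any inner vertex the four incident panels fall into two consecutive rows and two consecutive columns of the mesh, so a single row-or-column strip meets either none of them or exactly two, and in the latter case those two panels are necessarily adjacent, sharing the crease that runs along the strip. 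Hence the theorem will follow from a single-vertex lemma together with a check that the resulting re-parametrization of the folding angles is globally consistent.

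\textbf{Single-vertex lemma.} At a degree-4 vertex with creases $c_1,\dots,c_4$ and sector angles $\alpha_1,\dots,\alpha_4$ as in Figure~\ref{fig:degree-4 vertex}, replacing two adjacent sector angles, say $\alpha_1,\alpha_2$ (which meet along $c_2$), by $\pi-\alpha_1$ and $\pi-\alpha_2$ induces an involutive bijection between the configuration space of the vertex and that of the switched vertex, acting on folding angles by $\tan\frac{\rho_2}{2}\mapsto-\tan\frac{\rho_2}{2}$ on the ``spine'' crease $c_2$, by $\tan\frac{\rho_k}{2}\mapsto-1/\tan\frac{\rho_k}{2}$ on the two ``crossing'' creases $c_1,c_3$, and fixing $\rho_4$ --- the rule of Figure~\ref{fig: switching a strip}. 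I would prove this using the spherical model of a degree-4 vertex: the crease directions are unit vectors $u_1,\dots,u_4$ on $S^2$ with consecutive great-circle distances $\alpha_1,\dots,\alpha_4$, and the folding angles are, up to a fixed sign convention, the exterior angles of the spherical quadrilateral $u_1u_2u_3u_4$. Since $\alpha_1$ and $\alpha_2$ are precisely the two arcs meeting at $u_2$, the switch is realised \emph{exactly} by the antipodal substitution $u_2\mapsto-u_2$: the two arcs at $u_2$ become $\pi-\alpha_1$ and $\pi-\alpha_2$ while the other two are untouched, and $u_2\mapsto-u_2$ is manifestly an involution of the realisation space. Reading off the new exterior angles is then a short spherical-trigonometry identity giving the stated substitution on the $\tan(\rho_k/2)$; alternatively this identity is already implicit in the computations of \cite{izmestiev_classification_2016} underlying the switching of a boundary strip of a Kokotsakis quadrilateral, and our statement is exactly the extension of that operation to interior strips of arbitrarily large creased papers.

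Granting the lemma, the remainder is bookkeeping. Every inner crease is, relative to the switched strip, of exactly one of three kinds: it lies outside the strip (both adjacent panels outside), it is a spine crease (both adjacent panels in the strip), or it crosses the strip boundary; and a short case check shows the ``fixed'' fourth crease at each affected vertex is always of the first kind. Consequently the transformations prescribed by the two endpoints of any crease agree --- unchanged on outside creases, $\tan\frac{\rho}{2}\mapsto-\tan\frac{\rho}{2}$ on spine creases, $\tan\frac{\rho}{2}\mapsto-1/\tan\frac{\rho}{2}$ on crossing creases --- so applying the single-vertex lemma at every inner vertex defines an involution on rigidly folded states of the full creased paper, which simultaneously carries the tree structure's folding motion to that of the switched tree and carries the closing conditions to those of the switched paper (both copies of a cut crease receive the same transformation). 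Each re-parametrization is a non-constant homeomorphism of the circle of folding angles ($\tan\frac{\rho}{2}\mapsto-1/\tan\frac{\rho}{2}$ is $\rho\mapsto\rho\mp\pi$ there), so it sends a closed interval of the rigid folding motion to a closed arc, preserves 1-DOF, and maps a non-trivial motion (Definition~\ref{defn: trivial}) to a non-trivial one; invoking Theorem~\ref{thm: rigid-foldability} again yields rigid-foldability of the switched paper. The one substantive obstacle is the single-vertex lemma --- getting the exterior-angle computation on the sphere to match Figure~\ref{fig: switching a strip} exactly, signs included --- and the one point needing care in the globalisation is that the crossing-crease substitution has a pole, so ``closed interval'' in Theorem~\ref{thm: rigid-foldability} must be read on the circle $\rho\in[-\pi,\pi]/(\pm\pi)$ (or one restricts to intervals avoiding $\rho=\pm\pi$).
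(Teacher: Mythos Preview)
Your proposal is correct and takes essentially the same approach as the paper: the paper's proof simply cites Lemma~4.5 of \cite{izmestiev_classification_2016} for the per-vertex transformation of folding angles (your single-vertex lemma, which you instead derive via the antipodal map $u_2\mapsto-u_2$) and then invokes Theorem~\ref{thm: rigid-foldability}. Your global-consistency bookkeeping and the remark on the pole at $\rho=\pm\pi$ are details the paper leaves implicit.
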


\begin{proof}
	First consider switching a transverse strip. In figure \ref{fig: switching a strip}, (a) shows a rigid-foldable and developable quadrilateral creased paper, (b) is a rigidly folded state of (a). 
	We choose the middle row of panels to be switched, the sector angles are replaced by their complements to $\pi$. The tangent of half of folding angles on the labelled inner creases are denoted by $\{\rho_1,\rho_2,\rho_3,\rho_4,\rho_5,\rho_6,\phi_1,\phi_2,\phi_3,\phi_4,\phi_5,\phi_6\}$. Following Lemma 4.5 in \cite{izmestiev_classification_2016}, after switching this strip $\{\rho_1,\rho_2,\rho_3,\rho_4,\rho_5,\rho_6\} \rightarrow \{\rho_1,-\rho_2,\rho_3,\rho_4,-\rho_5,\rho_6\}$, $\{\phi_1,\phi_2,\phi_3,\phi_4,\phi_5,\phi_6\} \rightarrow \{-\phi_1^{-1},-\phi_2^{-1},-\phi_3^{-1},-\phi_4^{-1},-\phi_5^{-1},-\phi_6^{-1}\}$, which is shown in figure \ref{fig: switching a strip}(c). For a $m \times n$ quadrilateral mesh, the related folding angles on this strip are changed in the same way. Therefore from proposition \ref{thm: rigid-foldability}, switching a strip preserves the rigid-foldability of a quadrilateral creased paper. The proof for switching a longitudinal strip is similar.
\end{proof}

Note that when switching a strip, the crease lengths might need to be adjusted.

\subsection{Adding a parallel strip}

Given a quadrilateral creased paper, it is always possible to add another row or column of vertices with new inner creases parallel to its adjacent row or column, as shown in figure \ref{fig: switching a strip}(a) and \ref{fig: switching a strip}(d). After adding a parallel strip, the new folding angles are shown in figure \ref{fig: switching a strip}(e), therefore from proposition \ref{thm: rigid-foldability}, adding a parallel strip also preserves the rigid-foldability of a quadrilateral creased paper.

\section{Several types of large rigid-foldable quadrilateral creased papers} \label{section: methods}

In this section we will list several types of rigid-foldable quadrilateral creased papers following the two restrictions mentioned in Section \ref{sec: problem setting}. For all these types, switching some strips and adding some parallel strips leave the type unchanged. The justification for this categorization is provided in Section \ref{section: reason}. All the information on the names of rigid-foldable Kokotsakis quadrilaterals and the relation among sector angles of each type is provided in Section \ref{S-section: s2} of the supplementary material.

\subsection{Orthodiagonal} \label{subsection: orthodiagonal}

This type is stitched from orthodiagonal Kokotsakis quadrilaterals (figure \ref{fig: orthodiagonal}(a)). The independent input sector angles are $\alpha_j$ ($0 \le j \le n$) and $\beta_i$ ($1 \le i \le 2m-1$). The other sector angles $\gamma_{ij}$ can be solved one by one as follows.

\begin{enumerate} [(1)]
	\item In the first row,
	\begin{equation}
	\begin{gathered}
	\cos \alpha_0 \cos \gamma_{11} = \cos \alpha_1 \cos \beta_1 \\
	\cos \alpha_{j-1} \cos \gamma_{1,j} = \cos \alpha_j \cos \gamma_{1,j-1}, ~ 2 \le j \le n
	\end{gathered}
	\end{equation}
	\item From the second row,
	\begin{equation}
	\begin{gathered}
	\dfrac{\tan \beta_{2i-1}}{\tan \beta_{2i}}=\dfrac{\tan \gamma_{2i-1,j}}{\tan \gamma_{2i,j}}, ~ 1 \le i \le m-1, ~ 1 \le j \le n \\
	\cos \beta_{2i} \cos \gamma_{2i+1,1} = \cos \beta_{2i+1} \cos \gamma_{2i,1}, ~ 1 \le i \le m-1 \\
	\cos \gamma_{2i,j-1} \cos \gamma_{2i+1,j} = \cos \gamma_{2i,j} \cos \gamma_{2i+1,j-1}, ~ 1 \le i \le m-1, ~ 2 \le j \le n	
	\end{gathered}
	\end{equation}	
\end{enumerate}

Typical geometric features of this type are:
\begin{enumerate} [(1)]
	\item Each column or row of inner creases are co-planar.
	\item Each plane formed by a column of inner creases is orthogonal to each plane formed by a row of inner creases. Hence the name ``orthodiagonal''.
\end{enumerate}

\begin{rem} \label{rem: orthodiagonal}
	If $\beta_1=\pi-\alpha_0$ and $\beta_{2i+1}=\pi-\beta_{2i}$ ($1 \le i \le m-1$), this quadrilateral creased paper will also be developable. If $\beta_1=\alpha_0$ and $\beta_{2i+1}=\beta_{2i}$ ($1 \le i \le m-1$), this quadrilateral creased paper will also be flat-foldable.
\end{rem}

\begin{figure} [!tb]
	\noindent \begin{centering}
		\includegraphics[width=0.8\linewidth]{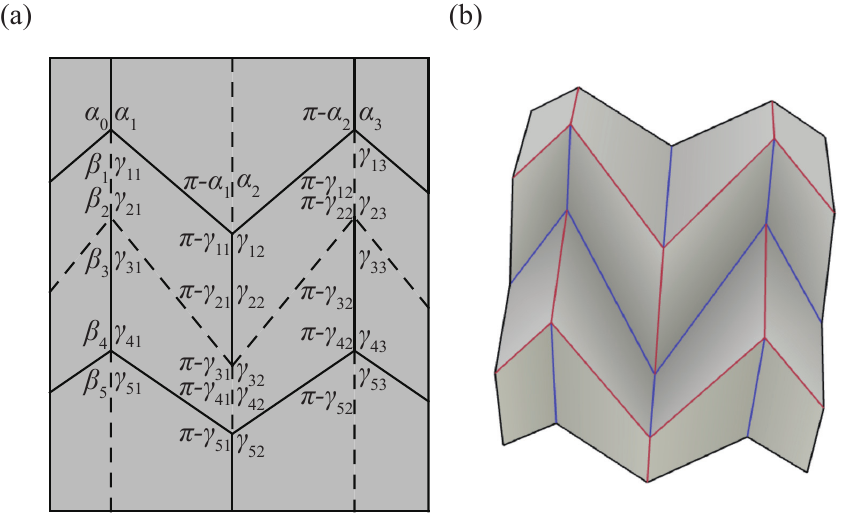}
		\par \end{centering}
	
	\caption{\label{fig: orthodiagonal}An example of the developable case of the orthodiagonal type mentioned in Remark \ref{rem: orthodiagonal}. (a) shows the relation among the sector angles, the mountain and valley creases are shown in solid and dashed lines. (b) is a rigidly folded state of (a). Each column or row of inner creases are co-planar, and each plane formed by a column of inner creases is orthogonal to each plane formed by a row of inner creases. (b) is plotted by Freeform Origami \cite{tachi_generalization_2009}, where the mountain and valley creases are coloured red and blue.}
\end{figure}

\subsection{Isogonal} \label{subsection: isogonal}

This type is the union of flat-foldable quadrilateral creased paper described in \cite{tachi_generalization_2009} and its variation from switching some strips. Typical geometric features of this type are:

\begin{enumerate} [(1)]
	\item If at each inner vertex the sum of opposite sector angles equals to $\pi$, this isogonal type is flat-foldable. Otherwise it is not flat-foldable.
	\item The absolute value of folding angles on a row or column of inner creases are equal.
\end{enumerate}

This type is named isogonal because at every inner vertex opposite sector angles can be equal after switching some strips. Note that the property of ``a rigidly folded state can guarantee a rigid folding motion'' described in \cite{tachi_generalization_2009} is special for the isogonal type because here the existence of a rigidly folded state is equivalent to equation \eqref{eq: blanket}. For other types the rigid-foldability condition on sector angles is more complicated, which prevents similar conclusions.

\subsection{Forward linear repeating} \label{subsection: linear 1}

Before presenting how to construct this type, we introduce \textit{linear units} (figure \ref{fig: kokotsakis sm}(b)) and the \textit{stitching} of linear units. Full information of linear units is provided in Type 4 of Section \ref{S-section: s2} of the supplementary material, and different types of linear units are labelled as 4.1(a), 4.1(b), 4.2(a), 4.2(b), 4.3(a), 4.3(b), 4.4(a), 4.4(b) and 4.5. \textit{Stitching} a linear unit with another linear unit means these two linear units form a Kokotsakis quadrilateral. They do not share vertices, but the folding angles on the matching creases must be the same. Linear unit 1 \textit{can stitch with} linear unit 2 if $\delta_1+\delta_2+\delta_3+\delta_4=2\pi$ and corresponding linear coefficients $c_1=c_2$. If two linear units are stitched in this way, we say they are stitched along the \textit{linear direction}.

\begin{figure} [!]
	\noindent \begin{centering}
		\includegraphics[width=0.9\linewidth]{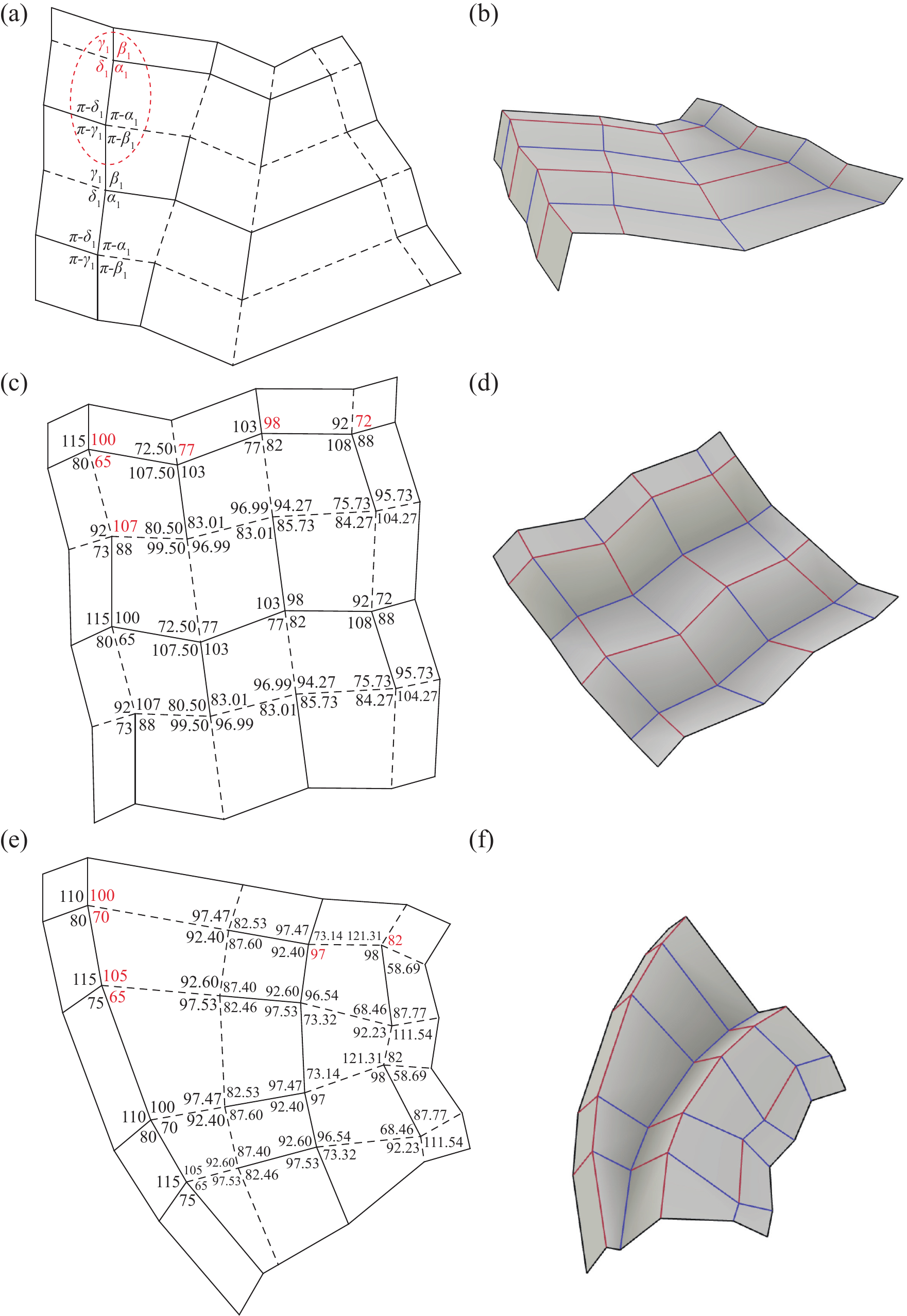}
		\par \end{centering}
	
	\caption{\label{fig: linear repeating}Examples of the forward linear repeating. All the sector angles are labelled in degrees. The independent input sector angles are coloured red in (a), (c) and (e). Other sector angles are solved numerically. The mountain and valley creases are shown in solid and dashed lines. (a) is formed by stitching ``parallel linear units'', one of which is illustrated with a dashed cycle. We name this type as ``parallel repeating''. This is a typical special rigid-foldable quadrilateral creased paper, and some other stitchings of linear units will degenerate to this case. (b) is a rigidly folded state of (a). (c) is formed by 4.1(b)--4.2(b)--4.2(b)--4.1(b). (d) is a rigidly folded state of (c). (e) is formed by 4.1(b)--4.3(b)--4.4(b)--4.1(b). (f) is a rigidly folded state of (e). (b), (d) and (f) are plotted by Freeform Origami \cite{tachi_generalization_2009}, where the mountain and valley creases are coloured red and blue.}
\end{figure}

This type of rigid-foldable quadrilateral creased paper is formed by stitching a row of linear units along the linear direction, and repeating this row orthogonal to the linear direction, which is why it is named "forward linear repeating". To allow repeating, apart from requiring $\delta_1+\delta_2+\delta_3+\delta_4=2\pi$, we should also set $\gamma_1+\gamma_2+\gamma_3+\gamma_4=2\pi$ (figure \ref{fig: kokotsakis sm}(a)). If using linear unit 4.1, $\kappa_1$ and $\kappa_2$ should either both be $\sin\frac{\alpha-\beta}{2}/\sin\frac{\alpha+\beta}{2}$ or both be $\cos\frac{\alpha-\beta}{2}/\cos\frac{\alpha+\beta}{2}$. With the restriction above, the rest of the sector angles can be solved numerically. Some example creased papers are shown in figure \ref{fig: linear repeating}. 

An important notice is there might be ``degeneration'' (which is discussed in Section \ref{section: discussion}) in the solutions. These degenerations can be found both analytically and numerically, and we provide this information in Table \ref{tab: linear degenerations}. Although it will not prevent solutions, the relationships among sector angles will be more special. In order not to make redundant classification, we will not split this type into smaller classes. 

\begin{table} [H]
	\begin{center}
		\begin{tabular}{|c|cccccc|}
			\hline
			& 4.2(a) & 4.2(b) & 4.3(a) & 4.3(b) & 4.4(a) & 4.4(b) \\
			\hline
			4.2(a) & \textcircled{\scriptsize 2} & \textcircled{\scriptsize 2} & \textcircled{\scriptsize 3} & \textcircled{\scriptsize 3} & \textcircled{\scriptsize 1} & \textcircled{\scriptsize 1} \\
			4.2(b) && \textcircled{\scriptsize 2} & \textcircled{\scriptsize 1} & \textcircled{\scriptsize 1} & \textcircled{\scriptsize 1} & \textcircled{\scriptsize 1} \\
			\hline
		\end{tabular}
		\caption{\label{tab: linear degenerations}This table shows when the stitching of linear units degenerates. \textcircled{\scriptsize 1} means the Kokotsakis quadrilateral formed by two linear units will degenerate to a Kokotsakis quadrilateral in the parallel repeating type (figure \ref{fig: linear repeating}(a)). \textcircled{\scriptsize 2} means it will degenerate to a Kokotsakis quadrilateral in the orthodiagonal type (Section \ref{subsection: orthodiagonal}). \textcircled{\scriptsize 3} means both \textcircled{\scriptsize 1} and \textcircled{\scriptsize 2} are possible.}
	\end{center}
\end{table} 

\subsection{Inverse linear repeating} \label{subsection: linear 2}

This type is formed by stitching a row of linear units 4.1 and 4.3 orthogonal to the linear direction, and repeating this row along the linear direction, which is why it is named as "inverse linear repeating". To allow repeating, apart from requiring $\delta_1+\delta_2+\delta_3+\delta_4=2\pi$, we should also set $\alpha_1+\alpha_2+\alpha_3+\alpha_4=2\pi$ (figure \ref{fig: kokotsakis sm}(a)). If using linear unit 4.1, $\kappa_1$ and $\kappa_2$ should either both be $\sin\frac{\alpha-\beta}{2}/\sin\frac{\alpha+\beta}{2}$ or both be $\cos\frac{\alpha-\beta}{2}/\cos\frac{\alpha+\beta}{2}$. An example is shown in figure \ref{fig: hybrid repeating}(a) and \ref{fig: hybrid repeating}(b).

\begin{figure} [!]
	\noindent \begin{centering}
		\includegraphics[width=0.9\linewidth]{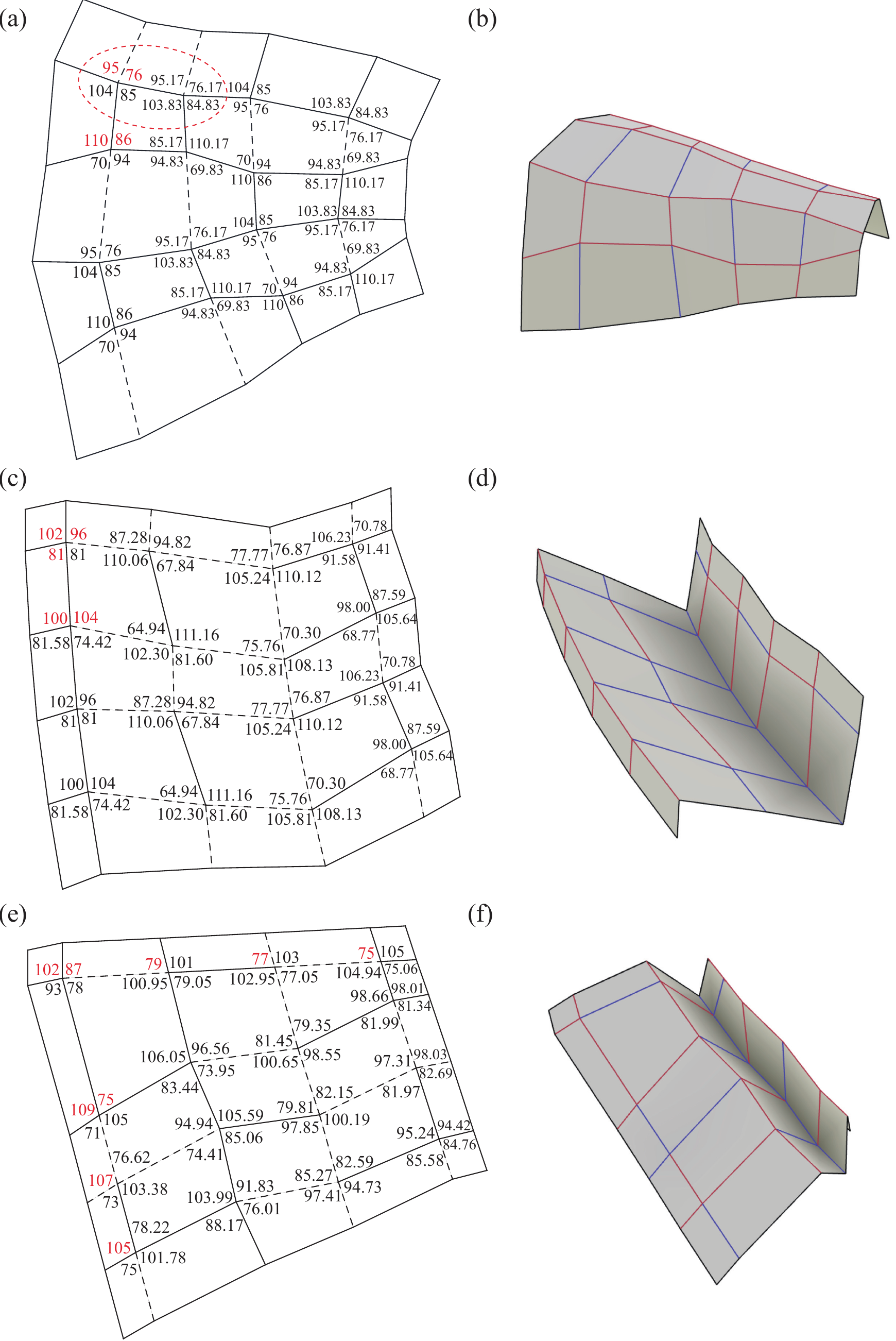}
		\par \end{centering}
	
	\caption{\label{fig: hybrid repeating}Examples of the inverse linear repeating, conic repeating and hybrid type. All the sector angles are labelled in degrees. The independent input sector angles are coloured red. Other sector angles are solved numerically. The mountain and valley creases are shown in solid and dashed lines. (a) is an example of the inverse linear repeating. Here a linear unit 4.1(b) is illustrated with a dashed cycle to show the difference of linear direction in the forward and inverse linear repeating. (b) is a rigidly folded state of (a). (c) is an example of the conic repeating. (d) is a rigidly folded state of (c). (e) is an example of the hybrid type. The first Kokotsakis quadrilateral is of type 6.1, its adjacent row and column are formed by stitching of type 1-2, the rest part is filled with type 1-1. (f) is a rigidly folded state of (e). (b), (d) and (f) are plotted by Freeform Origami \cite{tachi_generalization_2009}, where the mountain and valley creases are coloured red and blue.}
\end{figure}

\subsection{Conic repeating} \label{subsection: conic}

This type is formed by stitching a row of type 2.2 of the rigid-foldable Kokotsakis quadrilaterals, and repeating this row in the longitudinal direction. To allow repeating, apart from requiring $\delta_1+\delta_2+\delta_3+\delta_4=2\pi$, we should also set $\alpha_1+\alpha_2+\alpha_3+\alpha_4=2\pi$ (figure \ref{fig: kokotsakis sm}(a)). An example is shown in figure \ref{fig: hybrid repeating}(c) and \ref{fig: hybrid repeating}(d). 

\subsection{Hybrid} \label{subsection: hybrid}

This type is constructed by the following steps. An example is shown in figure \ref{fig: hybrid repeating}(e) and \ref{fig: hybrid repeating}(f). For the linear compounds type of rigid-foldable Kokotsakis quadrilateral, if it is stitched from two linear units labelled $4.m$ and $4.n$ $(1 \le m \le 5, ~1 \le n \le 5$, either (a) or (b)), we label it as $m$-$n$.

\begin{enumerate} [(1)]
	\item Choose the first Kokotsakis quadrilateral from types 1-1, 5.1, 5.2, 6.2, 6.3, 6.7 or 6.8.
	\item In both the row and column incident to the first Kokotsakis quadrilateral, choose types 1-1, 1-2, 1-3, or 1-4 to stitch with the first Kokotsakis quadrilateral.
	\item Fill the rest of the creased paper with type 1-1. 	
\end{enumerate}

\subsection{Tiling} \label{subsection: tiling}

This type is presented in \cite{dieleman_jigsaw_2020}. It is developable and has 3 independent input sector angles $\alpha, \beta, \gamma$. Setting $\delta=2\pi-\alpha-\beta-\gamma$, at each vertex, the sector angles are $\alpha, \beta, \gamma, \delta$ or $\pi-\alpha, \pi-\beta, \pi-\gamma, \pi-\delta$. We can interpret this type as a combinatorial special solution of the stitching of type 2.2 of the rigid-foldable Kokotsakis quadrilaterals, which is not formed by repeating.

\section{Number of branches of rigid folding motion}

In this section we provide a list of the number of branches of rigid folding motion for the different types listed above. 

\begin{table} [H]
	\begin{center}
		\begin{tabular}{|c|c|}
			\hline
			Type & Number of branches for a $m \times n$ mesh\\
			\hline
			Miura-ori & only upper bound known \cite{ginepro_counting_2014}\\
			\hline
		    Orthodiagonal & 1  \\
			\hline
			Isogonal & 1 \\
			\hline
			Forward linear repeating & from 1 to $2n$ \\
			\hline
			Inverse linear repeating & 1 \\
			\hline
			Conic repeating & 1 \\
			\hline
			Hybrid & 1 \\
			\hline
			Tiling & from 2 to $2^m+2^n-2$ \cite{dieleman_jigsaw_2020}\\
			\hline
		\end{tabular}
	\caption{Number of branches of rigid folding motion for each type of rigid-foldable quadrilateral creased paper presented. For forward linear repeating, linear units 4.1, 4.2 and 4.3 only have one branch, but 4.4 and 4.5 have two branches. The overall number is counted by multiplying the number of branches of each column.}
	\end{center}
\end{table} 

Generally, a rigid-foldable quadrilateral creased paper with more symmetry will have
a greater number of branches. For example, the number for the tiling type and Miura-ori could increase exponentially. However, for other types presented in this article, the number increases linearly or stays at 1.

\section{Justification for the possible types presented}\label{section: reason}

In this section we will describe some of the underlying mathematical structure that leads to the different types presented in Section \ref{section: methods}. 

In Section \ref{S-section: s1} of the supplementary material, a degree-4 single-vertex creased paper is classified as follows, based on different types of the configuration space.

\begin{defn}
	Consider the following equation
	\begin{equation} \label{eq: single-vertex characterization}
	\alpha \pm \beta \pm \gamma \pm \delta=0~(\mathrm{mod}~2\pi)
	\end{equation}
	A degree-4 single-vertex creased paper is said to be
	\begin{enumerate} [(1)]
		\item an \textit{isogram}, if $\alpha=\gamma, \beta=\delta$, and an \textit{antiisogram}, if $\alpha+\gamma=\pi, \beta+\delta=\pi$.
		\item a \textit{deltoid}, if $\alpha=\beta, \delta=\gamma$ or $\alpha=\gamma, \beta=\delta$, and an \textit{antideltoid}, if $\alpha+\beta=\pi, \delta+\gamma=\pi$ or $\alpha+\gamma=\pi, \beta+\delta=\pi$.
		\item of \textit{conic} type, if equation \eqref{eq: single-vertex characterization} has exactly one solution;
		\item of \textit{elliptic} type, if equation \eqref{eq: single-vertex characterization} has no solution;
	\end{enumerate}
\end{defn}

When stitching different types of rigid-foldable Kokotsakis quadrilaterals, vertices that are shared between the quadrilaterals must be of the same type. Note that if two Kokotsakis quadrilaterals cannot stitch with each other, switching a strip cannot make them stitchable since it does not change the type of a vertex, at most switching from deltoid to antideltoid, or isogram to antiisogram. We also need to check whether other relations among the existing sector angles are compatible with the new Kokotsakis quadrilateral we plan to stitch. 

Type 1.1 can only stitch with itself, forming one type of rigid-foldable quadrilateral creased paper, called \textit{orthodiagonal} (Section \ref{subsection: orthodiagonal}). The geometric properties mentioned can be examined directly. We have considered possible rotation of Kokotsakis quadrilaterals, but it does not increase the variation. 

For other types of Kokotsakis quadrilaterals a proposition is provided for further discussion.

\begin{prop} \label{prop: no solution}
	\begin{enumerate} [(1)]
		\item For any type of rigid-foldable Kokotsakis quadrilateral, if the 8 sector angles around two adjacent vertices are given and compatible with the constraints on sector angles, there is enough design freedom to solve the other 8 sector angles.
		\item Only for types 4, 5, 6.2, 6.4, 6.5, 6.7, 6.8 and 6.9, if the given 12 sector angles around three vertices are compatible with the constraints on sector angles, then if the unsolved vertex is an (anti)isogram, there is enough design freedom to solve the other 4 sector angles.
		\item For type 6.6, if the given 12 sector angles around three vertices are compatible with the constraints on sector angles, then if the unsolved vertex is elliptic, there is enough design freedom to solve the other 4 sector angles. 
		\item For other types, even if the given 12 sector angles around three vertices are compatible with the constraints on sector angles. Generically there is no solution for the other 4 sector angles.
	\end{enumerate}
\end{prop}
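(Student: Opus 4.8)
The plan is to run a case analysis over Izmestiev's classification, viewing the sixteen sector angles of a Kokotsakis quadrilateral as coordinates on an affine (or, for the transcendental types, analytic) variety and reading off the dimensions of the fibres of coordinate projections. The first step is to record, for each of the types $1.1$--$7.10$, the decomposition of the rigid-foldability system into: (i) \emph{per-vertex} conditions --- that a given base vertex be of the prescribed type in the sense of Definition~\ref{defn: types}, costing no equation for an elliptic vertex, one (of the form \eqref{eq: characteristic}) for a conic vertex, and two for a deltoid, antideltoid, isogram or antiisogram; (ii) \emph{edge-coupling} conditions, one per shared inner crease, matching the multiplier relating the two half-tangent folding angles across that crease (the constants $c$, $c'$ of the linear units and their analogues), together with the angle-sum conditions that Izmestiev attaches to the stitching; and (iii) the automatic constraints a Kokotsakis quadrilateral carries, notably that the four base angles are the angles of a planar quadrilateral. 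With this bookkeeping in hand, parts (1)--(4) become statements that a certain coordinate projection of the type variety is dominant with fibres of the claimed dimension.

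For part (1), the eight prescribed angles lie at two adjacent base vertices, so in the four-cycle of base vertices each of the two unknown vertices is coupled to exactly one prescribed vertex and to the other unknown vertex, while the base-angle sum links the two unknown base angles. The structural fact I would lean on is that a coupling condition constrains only a \emph{multiplier} and never pins down all four angles of a vertex, so one can solve greedily: first fix the four angles of the unknown vertex adjacent to one prescribed vertex, using up its single coupling and leaving residual freedom, then fix the four angles of the last unknown vertex, using up its two couplings and the base-angle sum. A count then shows that for every type at most one fewer condition is imposed than there are unknowns, so the residual solution set is nonempty of positive dimension --- provided the greedy steps are genuinely solvable, which is what must be checked per type, with care where a coupling degenerates.

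For parts (2)--(4), twelve angles are fixed at three base vertices and only the four angles at the remaining vertex are free, subject to its per-vertex condition, its two couplings --- now both to prescribed vertices --- and the base-angle sum. I would tabulate, type by type and for each admissible type of the last vertex, the number of \emph{effective} equations against the four unknowns. When the last vertex may be taken to be an isogram or antiisogram --- which, as one verifies from the classification, is precisely the situation for types $5$, $6$, $7.2$, $7.4$, $7.5$, $7.7$, $7.8$ and $7.9$ --- the multiplier at that vertex is a fixed constant, so its couplings impose nothing on the unknowns, the system is underdetermined, and part (2) follows; the analogous effect for an elliptic last vertex (no per-vertex equation) underlies part (3) for type $7.6$. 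For every remaining type the last vertex is forced into a class --- conic, deltoid, or antideltoid --- whose multiplier genuinely varies with the unknown angles, the couplings become genuine, and a direct count gives strictly more than four effective conditions on the four unknowns, so the system has no solution for a generic compatible twelve-tuple; this is part (4).

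The main obstacle is that none of the decisive facts --- dominance and consistency of the greedy steps in part (1), and genuine overdetermination in part (4) --- follows from the equation counts alone; each requires checking that the relevant Jacobian has the expected rank for generic data, and a handful of types (particularly among $6.1$--$6.3$ and $7.1$--$7.10$) have coupling relations that degenerate on lower-dimensional loci where a separate argument is needed. Compounding this, the transcendental types --- the exponential families and the elliptic-function description governing type $7.6$ and its relatives --- fall outside the naive affine dimension count, so there one must instead argue on the analytic moduli (the modulus of the underlying elliptic curve together with the positions of the marked points), verifying that fixing twelve angles still leaves the asserted one-parameter family. Carrying out these rank and analyticity verifications, type by type, is where the real work lies; granting them, parts (1)--(4) follow from the dimension bookkeeping above.
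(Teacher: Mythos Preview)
Your core approach---constraint counting over Izmestiev's classification---is exactly what the paper does, but the paper's own proof is far more terse than your proposal. The paper simply asserts that the rigid-foldability conditions in each type are independent, counts: with 8 angles fixed the number of remaining equations is at most 8, and with 12 angles fixed the remaining count is exactly 4 precisely for the types listed in (2) and (3) and strictly greater than 4 otherwise. That is the entire argument; there is no greedy solving, no rank verification, no separate analytic treatment of the transcendental types.

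Crucially, the paper itself acknowledges immediately after the proof that ``Proposition~\ref{prop: no solution} only provides an estimate'' and that the estimate was then ``verified numerically.'' So the obstacles you flag---that equation counts alone do not establish dominance or genuine overdetermination, that Jacobian ranks must be checked, that the elliptic and exponential types need analytic care---are real, but the paper does not attempt to overcome them; it treats the proposition as a heuristic dimension count backed by computation rather than as a fully rigorous theorem. Your structural decomposition into per-vertex, coupling, and base-angle constraints, and your mechanism for why (anti)isogram last vertices are special (fixed multiplier, so the coupling lands on the already-prescribed side), are refinements that the paper does not spell out. If you carried through the rank and analyticity checks you describe, you would have a genuinely stronger argument than the paper's; but for matching the paper, the bare equation count already suffices.
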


\begin{proof}
	The equations of rigid-foldability condition on sector angles in each type of 1.2 and 2--6 are independent. If the 8 sector angles around two adjacent vertices are given without contradiction to the constraints on sector angles, the number of remaining equations is no more than 8, and we say there is enough design freedom for the other 8 sector angles. If 12 sector angles are given around three vertices which are compatible with the constraints on sector angles, only for the cases mentioned in (2) and (3) the number of remaining equations is 4, otherwise it is greater than 4. In this sense, we say generically there is no solution for the other 4 sector angles.	
\end{proof}  

For type 1-1 (the labelling is explained in Section \ref{subsection: hybrid}), stitching with itself forms one type of large rigid-foldable quadrilateral creased paper, called \textit{isogonal} (Section \ref{subsection: isogonal}). Other stitching possibilities of type 1-1 are included in the discussions on type 4. 

For other types, proposition \ref{prop: no solution} only provides an estimate. However, this estimate has been verified numerically. Apart from the cases given in proposition \ref{prop: no solution}(2) and \ref{prop: no solution}(3), we have not found it to be possible to obtain a large quadrilateral creased paper just by solving equations quadrilateral by quadrilateral. Therefore our next step is to apply symmetry to construct special solution. Note that from Section \ref{sec: operation}, many other variations based on these special solutions can be obtained by switching some strips and adding some parallel strips.

\subsection{Method 1} \label{spe: repeating}
	
This method is to stitch compatible categories of rigid-foldable Kokotsakis quadrilaterals in one row by solving equations quadrilateral by quadrilateral, and repeating this row in the longitudinal direction to obtain a large quadrilateral creased paper, which makes the creased paper rowwise-periodical. From proposition \ref{prop: no solution} it is possible to solve a row, and when constructing special solutions in the longitudinal direction, we should consider that each time when solving 8 sector angles based on 8 known sector angles, the number of additional constraints to allow such special solutions plus the number of original constraints should not exceed 8. The reason for only using repeating here will be explained in Section \ref{subsection: other}.

Consider generating the row of Kokotsakis quadrilaterals for repeating in the longitudinal direction. Apart from the initial quadrilateral, each time we add a quadrilateral and solve 8 sector angles based on 8 known sector angles. Therefore it is necessary to analyse if the 8 sector angles around two adjacent vertices are given, which type of rigid-foldable Kokotsakis quadrilateral is still rigid-foldable after repeating in the longitudinal direction. Note that those rigid-foldable Kokotsakis quadrilaterals might be able to rotate when stitching. To make it clear, here we describe the placement of a Kokotsakis quadrilateral as ``regular'' if it is aligned with the description in Section \ref{S-section: s2} of the supplementary material, or ``rotated'' if the placement is orthogonal. For regular position, an extra condition $\alpha_1+\alpha_2+\alpha_3+\alpha_4=2\pi$, must be added to allow repetition because the sum of sector angles on each inner panel should be $2\pi$. For rotated position, the extra condition becomes $\gamma_1+\gamma_2+\gamma_3+\gamma_4=2\pi$.

\begin{prop} \label{prop: possible repeating}
	The following statements describe, for each type of rigid-foldable Kokotsakis quadrilateral, whether it is possible to solve 8 sector angles numerically based on 8 known sector angles of adjacent two vertices such that after repeating the new Kokotsakis quadrilateral is still rigid-foldable.	
	\begin{enumerate} [(1)]
		\item For types 2.1 and 2.2, it is possible to obtain another rigid-foldable Kokotsakis quadrilateral by repeating.
		\item For type 1-1, at each vertex there is a coefficient $\kappa$, which can be chosen from $\sin\frac{\alpha-\beta}{2}/\sin\frac{\alpha+\beta}{2}$ or $\cos\frac{\alpha-\beta}{2}/\cos\frac{\alpha+\beta}{2}$. If the number of each choice is 0, 2 or 4, for either the regular or rotated position it is possible to obtain another rigid-foldable Kokotsakis quadrilateral by repeating, otherwise not.
		\item For type 4 (excluding 1-1), for the rotated position it is possible to obtain another rigid-foldable Kokotsakis quadrilateral by repeating. When there is a linear unit 4.1, $\kappa_1$ and $\kappa_2$ should be both  $\sin\frac{\alpha-\beta}{2}/\sin\frac{\alpha+\beta}{2}$ or both $\cos\frac{\alpha-\beta}{2}/\cos\frac{\alpha+\beta}{2}$.		
		\item For type 1-3, for the regular position it is possible to obtain another rigid-foldable Kokotsakis quadrilateral by repeating. When there is a linear unit 4.1, $\kappa_1$ and $\kappa_2$ should be both $\sin\frac{\alpha-\beta}{2}/\sin\frac{\alpha+\beta}{2}$ or both $\cos\frac{\alpha-\beta}{2}/\cos\frac{\alpha+\beta}{2}$. 	
		\item For type 6.1, for either the regular or rotated position, it is possible to obtain another rigid-foldable Kokotsakis quadrilateral by repeating. 
		\item For types 6.4 and 6.5, for the rotated position it is possible to obtain another rigid-foldable Kokotsakis quadrilateral by repeating.
		\item For other types or placements generically it is not possible to obtain another rigid-foldable Kokotsakis quadrilateral by repeating.
	\end{enumerate}
\end{prop}

\begin{proof}
Statements (1), (5) and (6) can be verified directly.

Statements (2), (3) and (4): For type 4, repeating in either the regular or rotated position preserves the linear relations among the tangent of half of the folding angles. For linear unit 4.1, if $\kappa_1$ and $\kappa_2$ are both $\sin\frac{\alpha-\beta}{2}/\sin\frac{\alpha+\beta}{2}$ or both $\cos\frac{\alpha-\beta}{2}/\cos\frac{\alpha+\beta}{2}$, the linear coefficient $c$ remains the same after repeating in either the regular position or the rotated position, otherwise only the sign of $c$ is changed. For linear unit 4.3, $c$ remains the same after repeating in either the regular position or the rotated position. For linear units 4.2--4.5, $c$ remain the same after repeating in the rotated position, while in the regular position change its value. Since when two linear units stitch with each other the linear coefficient must be the same, the proposition holds.

Statement (7): For all the other types or placements, either the additional constraints to allow repeating contradict with the original constraints on sector angles, or the number of additional constraints to allow repeating plus the number of original constraints on sector angles is greater than 8. In this sense we say generically these types or placements are not suitable for repeating.
\end{proof}

The possible types that can be used to stitch with other types also determine the first Kokotsakis quadrilateral in this row. However, our numerical studies have lead to the following restrictive observations about the selection of types.

\begin{obs} \label{obs: negative 1}
	Only non-real numerical solutions are found for types 2.1, 5.3, and 6.9.
\end{obs}

\begin{obs} \label{obs: negative 2}
	The only numerical solution found for type 6.1 degenerates to a Kokotsakis quadrilateral in the ``parallel repeating'' type shown in figure \ref{fig: linear repeating}(a).  
\end{obs}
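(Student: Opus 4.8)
The plan is to read Observation~\ref{obs: negative 2} as a claim about the real solution set of the defining system of a type 7.1 Kokotsakis quadrilateral once the repeating constraints are imposed, and to show --- partly by an elimination/constraint-count argument and partly by a systematic numerical sweep --- that this set is contained in the parallel-repeating locus. By Proposition~\ref{prop: possible repeating}(5) we already know that type 7.1 admits repeating in both the regular and the rotated position, so the construction of Section~\ref{spe: repeating} genuinely produces Kokotsakis quadrilaterals here; the content to be established is that none of them is a \emph{non-degenerate} type 7.1 quadrilateral.

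First I would write down explicitly the trigonometric system on the sixteen sector angles that characterises type 7.1, taken from Section 3.7 of \cite{izmestiev_classification_2016}, together with the four vertex-type conditions and the repeating constraint ($\gamma_1+\gamma_2+\gamma_3+\gamma_4=2\pi$ in the rotated position, $\alpha_1+\alpha_2+\alpha_3+\alpha_4=2\pi$ in the regular position). After the substitution $t_i=\tan(\alpha_i/2)$ and its analogues, this becomes a polynomial system; I would then compute a numerical irreducible decomposition (or a Gr\"obner basis over a suitably small fibre) and check that the only real components on which all sixteen angles lie in $(0,\pi)$ are those on which a pair of the constituent linear units has parallel fold directions --- which is exactly the defining feature of a parallel linear unit, hence of parallel repeating (Figure~\ref{fig: linear repeating}(a)). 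In parallel I would run the same kind of count as in the proof of Proposition~\ref{prop: no solution}: adjoining the repeating equation to the type 7.1 locus pushes the number of independent constraints past the available design freedom except on the boundary stratum where one linear unit collapses, which supplies the heuristic reason that the only survivors are parallel-repeating.

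The main obstacle is two-fold. First, the classification equations for type 7.1 carry $\pm$ sign ambiguities, so the solution set is really a union of sheets and the argument must be carried out sheet by sheet; overlooking a sheet is the easy mistake here, and it is also where the range restrictions $\alpha_i\in(0,\pi)$ interact badly with closed-form elimination. Second --- and this is why the statement is phrased as an Observation rather than a Proposition --- a fully rigorous exclusion of every non-degenerate real solution seems beyond elementary elimination once the sign choices and the range constraints are combined. Accordingly, the honest core of the ``proof'' is the numerical evidence: a record of the solver used, the sampling of the admissible sector-angle box, and the tolerance at which degeneration to parallel repeating is detected. I would present that protocol as the substantive support and present the analytic elimination only as a plausibility argument, conditional on the sheet-by-sheet verification going through.
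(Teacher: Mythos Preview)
Your proposal is not wrong, but it substantially \emph{over-delivers} relative to the paper. Observation~\ref{obs: negative 2} is not accompanied by any proof in the paper at all: it is introduced with the sentence ``our numerical studies have lead to the following restrictive observations,'' and the only further comment is in the Discussion, where the authors describe running \texttt{fsolve} in MATLAB with random input sector angles and initial values and note that ``sometimes negative numerical results appear in the form of degeneration, as mentioned in Observation~\ref{obs: negative 2}.'' There is no elimination argument, no constraint count specific to type 7.1, no sheet-by-sheet analysis, and no systematic sweep --- just the report that every solution they happened to find was of parallel-repeating form.

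So the honest comparison is this: the paper treats the statement purely as an experimental finding and offers no argument beyond ``we tried and this is all we got,'' whereas you are outlining an actual programme (polynomial reformulation, numerical irreducible decomposition, a constraint-count heuristic in the style of Proposition~\ref{prop: no solution}) that could, in principle, upgrade the observation toward a proposition. Your own caveat --- that the sign ambiguities and range restrictions make a fully rigorous exclusion hard, and that the analytic part is only a plausibility argument --- is well taken and is exactly why the authors did not attempt it. If your task is to match the paper, a single sentence recording the numerical protocol suffices; if your task is to do better than the paper, your plan is a reasonable start, with the sheet enumeration being the genuine bottleneck you correctly identify.
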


\begin{obs} \label{obs: negative 3}
	Types 6.4 and 6.5 have numerical solutions, but are rigid in $\mathbb{R}^3$.
\end{obs}

Then considering the compatibility among all the types and placements, the possible stitchings for method 1 are listed as follows. The row of Kokotsakis quadrilaterals to be repeated can be constructed by the following ways. 

\begin{enumerate} [(1)]
	\item \textit{Forward Linear Repeating}: Stitch among the linear units of type 4 in the rotated position (Section \ref{subsection: linear 1}).\\
	
	\item \textit{Inverse Linear Repeating}: Stitch among type 1-3 in the regular position (Section \ref{subsection: linear 2}).\\
	
	Just from the type of vertices in 1-3, we can choose types 1-3, 5.1, 6.3, 6.4 and 6.5 as the first Kokotsakis quadrilateral. However, from Observation \ref{obs: negative 3}, 6.4 and 6.5 are not suitable. More careful examination on types 5.1 and 6.3 shows that the additional requirements to allow repeating contradicts with the rigid-foldability condition on sector angles.\\
	
	\item \textit{Conic Repeating}: Stitch among type 2.2. (Section \ref{subsection: conic})\\
	
	Apart from linear units, from Observations \ref{obs: negative 1}--\ref{obs: negative 3} the only remaining type is 2.2. What we then need to consider is choosing the first Kokotsakis quadrilateral in this row, which can be types 2.2, 5-5, and 6.8. Choosing 5-5 will become a special case of 3.2. More careful examination on type 6.8 shows that the additional requirements to allow repeating contradicts with the rigid-foldability condition on sector angles. 
	~\\
\end{enumerate}    

\subsection{Method 2} \label{spe: solving}

This method is to construct a row and a column of rigid-foldable Kokotsakis quadrilaterals, then consider the stitching of solvable cases mentioned in proposition \ref{prop: no solution}(2) and \ref{prop: no solution}(3) to obtain a large quadrilateral creased paper. It is necessary to make sure that ``solving one vertex based on three vertices'' can proceed in both longitudinal and transverse directions, which excludes type 6.6. Considering solving an (anti)isogram, the possible stitchings are presented in Section \ref{subsection: hybrid}, called the \textit{hybrid} type. From Observations \ref{obs: negative 1} and \ref{obs: negative 3}, types 5.3, 6.4, 6.5 and 6.9 are excluded when choosing the first Kokotsakis quadrilateral.

\subsection{Other special solutions} \label{subsection: other}

It is natural to consider whether other operations are possible after constructing a row of rigid-foldable Kokotsakis quadrilaterals. For instance, the new rows can be generated by substituting some sector angles by their complements to $\pi$, or switching the position of some sector angles. From our observation, generically such operations

\begin{enumerate} [(1)]
	\item do not result in another rigid-foldable Kokotsakis quadrilateral.
	\item need two or more extra conditions for ensuring the sector angles on each inner panel to be $2\pi$, where the total number of constraints exceeds 8.
\end{enumerate}

Therefore only repeating is considered in method 1. We do, however, admit that repeating is only an elementary special solution, and possibly these operations could be applied in some non-generic cases.

Further, there are some more special solutions which are more dependent on symmetry. \cite{dieleman_jigsaw_2020} provided the \textit{tiling type} in (Section \ref{subsection: tiling}). However, it is unclear to us how to use ``tiling'' to generate a large quadrilateral creased paper from a solved row of rigid-foldable Kokotsakis quadrilaterals. Some insights on finding more types of rigid-foldable quadrilateral creased papers are provided in Section \ref{section: discussion}.

\section{Discussion} \label{section: discussion}

\subsection{Comment on further solutions}

Here we point out some considerations for further exploration of large rigid-foldable quadrilateral creased papers.

\begin{enumerate} [(1)]
	\item Although for a large quadrilateral creased paper, generically the number of constraints for rigid-foldability is much greater than the number of sector angles, it maybe still possible to solve this system as a whole and find some special solutions.
	\item Following the idea of solving the sector angles quadrilateral by quadrilateral, in Section \ref{section: reason} we move forward only by comparing the number of constraints and variables. However, this is not strict for non-linear systems. By studying the rigid-foldability conditions more carefully there may be further special solutions. Then it might be possible to go beyond the known types and find more general rigid-foldable quadrilateral creased papers.  
\end{enumerate}

\subsection{Comment on the numerical results}

For the types mentioned above, only the orthodiagonal, isogonal, parallel repeating, and tiling do not require solving complicated equations, while other types need some input sector angles and solve the rest of the sector angles numerically. We will comment on these numerical solutions here. 

\begin{enumerate}[(1)]
	\item  When making examples we used \textit{fsolve} in MATLAB with random input sector angles and initial values for solution. The numerical solutions can be sensitive to the input sector angles and initial values. Given a set of input sector angles there might be no solution, or only complex solutions, as mentioned in Observation \ref{obs: negative 1}. Sometimes an exact numerical solution can be obtained only after trying many different input sector angles and initial values. Changing the position of input sector angles will also affect the numerical result. 
	
	\item Most of equations in the rigid-foldability condition are trigonometric, but some are exponential or elliptic. Trigonometric equations can be transformed into polynomial equations with the following relations:
	\begin{equation}
	\cos \alpha = \dfrac{1-\tan^2 \dfrac{\alpha}{2}}{1+\tan^2 \dfrac{\alpha}{2}}, \quad \sin \alpha = \dfrac{2\tan \dfrac{\alpha}{2}}{1+\tan^2 \dfrac{\alpha}{2}}
	\end{equation}
	which has the advantage that the numerical solution of a system of polynomial equations has been well studied. It would be possible to apply more advanced numerical methods here for better results.
	
	\item In Observation \ref{obs: negative 3}, we mentioned that types 6.4 and 6.5 can be solved, but are rigid in $\mathbb{R}^3$. This is because the rigid-foldability constraints on sector angles are derived in the complexified configuration space, where the real folding angles may be isolated. 
\end{enumerate}

Another phenomenon in the numerical solution is called \textit{degeneration}, which can be divided into two types. 

\begin{enumerate}[(a)]
	\item An (anti)deltoid or (anti)isogram may degenerate to a cross (two pairs of collinear creases). A conic vertex may degenerate to an (anti)deltoid or (anti)isogram. An elliptic vertex may degenerate to a conic vertex. Further degeneration is also possible.
	\item There might be unexpected relations among the sector angles, such as in figure \ref{fig: linear repeating}(e) and \ref{fig: linear repeating}(f), some inner creases in different linear units are parallel; and in figure \ref{fig: hybrid repeating}(a) and \ref{fig: hybrid repeating}(b) when solving new sector angles in a row some values recur.
\end{enumerate}

Apart from a vertex becoming a cross, such degenerations are acceptable.

\subsection{Choice of crease lengths and possible self-intersection}

After finding all the sector angles of a large quadrilateral creased paper, the length of inner creases in a row and a column is adjustable. When plotting the creased paper based on a set of known sector angles, the length of inner creases should be adjusted to avoid intersection of inner creases at some points other than vertices.

Further, even if the creased paper is successfully constructed, there might be self-intersection in the rigid folding motion. Apart from simulation, efficient methods to predict self-intersection are still unknown. We provide a discussion in \cite{he_rigid_2019}.

\subsection{Perturbation Method}

If consider finding rigid-foldable quadrilateral creased paper numerically, a possible method is applying perturbation to Miura-ori (in $\mathbb{R}^3$) and examining the error step by step. If the series converges, the limit would be a new solution. This numeric method might help us to find new variations that cannot be explained by the analytical solutions mentioned in this article. However, the algorithm must be carefully designed to ensure convergence and be capable of finding new solutions.

\section{Conclusion}

Based on a nearly complete classification of rigid-foldable Kokotsakis quadrilaterals from Ivan Izmestiev, this paper describes several new variations of large rigid-foldable quadrilateral creased papers, without any restriction on the developability or flat-foldability. The rigid folding motions of these new variations are more irregular but still have single degree of freedom.

\vskip6pt

\enlargethispage{20pt}

\begin{description}
	\item [Data Access] This article has a supplementary material.
	\item [Author Contribute] ZH developed the theories described here, and took the primary role in writing the paper. SDG provided advice on, and made contributions to, the development of the project; and helped to write the paper. All authors gave final approval for publication.
	\item [Competing] We have no competing interests.
	\item [Funding] ZH was partially funded by a studentship awarded by the George and Lilian Schiff Foundation.
	\item [Acknowledge] We thank Ivan Izmestiev and Grigory Ivanov for helpful discussions at the workshop organized by the Erwin Schrodinger Institute on "Rigidity and Flexibility of Geometric Structures". We also thank Martin Van Hecke for helpful discussions at the 7th International Meeting on Origami, Science, Mathematics, and Education.
\end{description}


\vskip2pt

\bibliographystyle{unsrt}

\bibliography{Rigid-Folding}

\end{document}


\maketitle
	
\numberwithin{equation}{section}
\numberwithin{prop}{section}

In this supplementary material we will provide the configuration space of a degree-4 single-vertex creased paper in Section \ref{section: s1}, then present all types of rigid-foldable Kokotsakis quadrilaterals in Section \ref{section: s2}. These two topics are the base of the classification of large rigid-foldable quadrilateral mesh in the main text. On the way to a full solution of the two topics, \cite{bricard_memoire_1897, sauer_uber_1931} presented some primary results at around a hundred years ago, \cite{stachel_kinematic_2010, nawratil_reducible_2011, nawratil_reducible_2012} made some progress, and \cite{izmestiev_classification_2016} gave the full characterization. Here we present the result from the comprehensive catalogue given in \cite{izmestiev_classification_2016}, which is adapted to make it more intelligible for the result provided in the main text.

\section{Degree-4 single-vertex creased paper} \label{section: s1}

In this section we will describe the relation among adjacent folding angles in a degree-4 single-vertex creased paper. As figure \ref{fig: degree-4 vertex} shows, the sector angles $\alpha, \beta, \gamma, \delta$ and tangent of half of a pair of adjacent folding angles $z, w$ are labelled. Here $\alpha, \beta, \gamma, \delta \in (0, \pi)$, and $z,w \in [-\pi, \pi]$. In order to present the results clearly, we classify a single-vertex creased paper based on the following principle.

\begin{defn}
	Consider the following equation
	\begin{equation} \label{eq: single-vertex characterization}
	\alpha \pm \beta \pm \gamma \pm \delta=0~(\mathrm{mod}~2\pi)
	\end{equation}
	A degree-4 single-vertex creased paper is said to be
	\begin{enumerate} [(1)]
		\item an \textit{isogram}, if $\alpha=\gamma, \beta=\delta$, and an \textit{antiisogram}, if $\alpha+\gamma=\pi, \beta+\delta=\pi$.
		\item a \textit{deltoid}, if $\alpha=\beta, \delta=\gamma$ or $\alpha=\gamma, \beta=\delta$, and an \textit{antideltoid}, if $\alpha+\beta=\pi, \delta+\gamma=\pi$ or $\alpha+\gamma=\pi, \beta+\delta=\pi$.
		\item of \textit{conic} type, if equation \eqref{eq: single-vertex characterization} has exactly one solution;
		\item of \textit{elliptic} type, if equation \eqref{eq: single-vertex characterization} has no solutions;
	\end{enumerate}
\end{defn}

For different types, the forms of expression of $z$ and $w$ (possibly with respect to a parameter $t$) are different, which are listed below.

\begin{prop}
	For an (anti)isogram, the relation of $z$ and $w$ is:
	\begin{enumerate}[(a)]
		\item if $\alpha=\beta=\gamma=\delta=\pi/2$,
		\begin{equation}
		z=0 \quad \mathrm{or} \quad z=\infty \quad \mathrm{or} \quad w=0 \quad \mathrm{or} \quad w=\infty
		\end{equation}
		\item if the single creased paper is an antiisogram and at the same time an antideltoid, 
		\begin{equation}
		\left\{
		\begin{array}{lcl}  
		z=0~~\mathrm{or}~~z=\frac{w}{\cos \alpha}~&\mathrm{if}~\alpha=\beta=\pi-\gamma=\pi-\delta  \\  
		w=0~~\mathrm{or}~~w=-\frac{z}{\cos \alpha}~&\mathrm{if}~\alpha=\delta=\pi-\beta=\pi-\gamma   
		\end{array}  
		\right.  
		\end{equation}
		\item if the single creased paper is an antiisogram but not an antideltoid,
		\begin{equation}
		z=\kappa w,~\mathrm{where}~\kappa=\dfrac{\sin \frac{\alpha-\beta}{2}}{\sin \frac{\alpha+\beta}{2}}~~\mathrm{or}~~\dfrac{\cos \frac{\alpha-\beta}{2}}{\cos \frac{\alpha+\beta}{2}}
		\end{equation}
		\item if the single creased paper is an isogram and at the same time a deltoid,
		\begin{equation}
		z=\infty~~\mathrm{or}~~
		z=
		\dfrac{\cos \alpha}{w}
		\end{equation}
		\item if the single creased paper is an isogram but not a deltoid,
		\begin{equation}
		z=\dfrac{1}{\kappa w},~\mathrm{where}~\kappa=\dfrac{\sin \frac{\alpha-\beta}{2}}{\sin \frac{\alpha+\beta}{2}}~~\mathrm{or}~~\dfrac{\cos \frac{\alpha-\beta}{2}}{\cos \frac{\alpha+\beta}{2}}
		\end{equation}
		\end{enumerate}
\end{prop}

In the above cases all the variables and parameters are real numbers. However, in the next propositions, $z$ and $w$ are real numbers with respect to a complex parameter $t$.

\begin{figure} [!tb]
	\noindent \begin{centering}
		\includegraphics[width=0.4\linewidth]{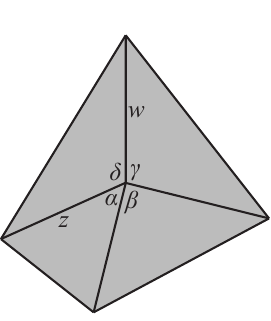}
		\par \end{centering}
	
	\caption{\label{fig: degree-4 vertex}(a) A degree-4 single-vertex creased paper. We label these sector angles as $\alpha, \beta, \gamma, \delta$ counterclockwisely. $z$ and $w$ are the tangent of half of folding angles on the corresponding inner creases. $z$ is between sector angles $\delta$ and $\alpha$, $w$ is between sector angles $\gamma$ and $\delta$. $z=\tan \dfrac{\rho_{\delta \alpha}}{2}$, $w=\tan \dfrac{\rho_{\alpha \beta}}{2}$.}
\end{figure}

\begin{prop}
	For an (anti)deltold but not an (anti)isogram, the expression of $z$ and $w$ (possibly with respected to a parameter $t$) is (for square roots, both the real and imaginary parts are positive, $i=\sqrt{-1}$):
\begin{enumerate}[(a)]
	\item if $\alpha=\delta$ and $\beta=\gamma$
	\begin{equation}
	\begin{gathered}
	z=\infty\\
	\mathrm{or} \\
	z=p \sin t, \quad w=\epsilon \sqrt{-\mu}e^{it}
	\end{gathered}
	\end{equation}
	where
	\begin{equation*}
	\begin{gathered}
	p=\sqrt{\dfrac{\sin^2 \delta}{\sin^2 \gamma}-1}, \quad
	\epsilon=\left\{	\begin{array}{lcl}  
	1 &\mathrm{if}~\gamma+\delta>\pi  \\  
	-1 &\mathrm{if}~\gamma+\delta<\pi
	\end{array},  
	\right. ~~~
	\mu=\dfrac{\tan \delta+\tan \gamma}{\tan \delta-\tan \gamma} 
	\end{gathered} 
	\end{equation*}
	\item if $\alpha+\delta=\beta+\gamma=\pi$,
	\begin{equation}
	\begin{gathered}
	z=0\\
	\mathrm{or} \\
	z^{-1}=p \sin t, \quad w=\epsilon \sqrt{-\mu}e^{it}
	\end{gathered}
	\end{equation}
	where
	\begin{equation*}
	\begin{gathered}
	p=\sqrt{\dfrac{\sin^2 \delta}{\sin^2 \gamma}-1}, \quad \epsilon=\left\{	\begin{array}{lcl}  
	-1 &\mathrm{if}~\gamma+\delta>\pi  \\  
	1 &\mathrm{if}~\gamma+\delta<\pi
	\end{array},  
	\right. ~~~ \mu=\dfrac{\tan \delta+\tan \gamma}{\tan \delta-\tan \gamma} 
	\end{gathered} 
	\end{equation*}
	\item if $\alpha=\beta$ and $\gamma=\delta$,
	\begin{equation}
	\begin{gathered}
	w=\infty\\
	\mathrm{or}\\
	\quad z=\iota \sqrt{-\lambda}e^{it}, \quad w=q \sin t
	\end{gathered}
	\end{equation}
	where
	\begin{equation*}
	\begin{gathered}
	q=\sqrt{\dfrac{\sin^2 \delta}{\sin^2 \alpha}-1}, \quad \iota=\left\{
	\begin{array}{lcl}  
	1 &\mathrm{if}~\alpha+\delta>\pi  \\  
	-1 &\mathrm{if}~\alpha+\delta<\pi
	\end{array},  
	\right. ~~~ \lambda=\dfrac{\tan \delta+\tan \alpha}{\tan \delta-\tan \alpha} 
	\end{gathered} 
	\end{equation*}
	\item if $\alpha+\beta=\gamma+\delta=\pi$,
	\begin{equation}
	\begin{gathered}
	w=0 \\
	\mathrm{or} \\
	\quad z=\iota \sqrt{-\lambda}e^{it}, \quad w^{-1}=q \sin t
	\end{gathered}
	\end{equation}
	where
	\begin{equation*}
	\begin{gathered}
	q=\sqrt{\dfrac{\sin^2 \delta}{\sin^2 \alpha}-1}, \quad \iota=\left\{
	\begin{array}{lcl}  
	-1 &\mathrm{if}~\alpha+\delta>\pi  \\  
	1 &\mathrm{if}~\alpha+\delta<\pi
	\end{array}, 
	\right. ~~~ \lambda=\dfrac{\tan \delta+\tan \alpha}{\tan \delta-\tan \alpha}
	\end{gathered} 
	\end{equation*}
	\end{enumerate}
\end{prop}

In this case, one branch is trivial and another is non-trivial. The domain of $t$ for the non-trivial branch is $\pm \dfrac{\pi}{2}+i\mathbb{R}$ if $\mu$ or $\lambda$ is positive; and is $\mathbb{R} \cup (\pi+i\mathbb{R})$ if $\mu$ or $\lambda$ is negative.

\begin{prop} \label{prop: conic}
	For the conic type, the expression of $z$ and $w$ with respected to a parameter $t$ is (for square roots, both the real and imaginary parts are positive):
	\begin{equation}
	z^m=p \sin t, \quad w^n=q \sin(t+t_0)
	\end{equation}
	where
	\begin{center}
	\begin{tabular}{|c|c|c|}
		\hline  
		&$n=1$&$n=-1$\\
		\hline  
		$m=1$&$\alpha+\gamma=\beta+\delta$&$\alpha+\beta=\gamma+\delta$\\
		\hline 
		$m=-1$&$\alpha+\delta=\beta+\gamma$&$\alpha+\beta+\gamma+\delta=2\pi$\\
		\hline
	\end{tabular}
	\end{center}
	\begin{equation*}
	\begin{gathered}
	p=\sqrt{\dfrac{\sin \alpha \sin \delta}{\sin \beta \sin \gamma}-1}, \quad q=\sqrt{\dfrac{\sin \gamma \sin \delta}{\sin \alpha \sin \beta}-1} \\
	t_0=\tan^{-1}\left(i\sqrt{\dfrac{\sin \beta \sin \delta}{\sin \alpha \sin \gamma}}\right)
	\end{gathered} 
	\end{equation*}
	when deriving $t_0$, the phase choice is
	\begin{center}
	\begin{tabular}{|c|c|c|c|}
		\hline  
		$t_0 \in$ &$pq \in \mathbb{R}_+$&$pq \in i\mathbb{R}_+$&$pq \in \mathbb{R}_-$\\
		\hline  
		$\sigma<\pi$& $i\mathbb{R}_+$ &$\frac{\pi}{2}+i\mathbb{R}_+$&$\pi+i\mathbb{R}_+$\\
		\hline 
		$\sigma>\pi$&$\pi+i\mathbb{R}_+$&$\frac{3\pi}{2}+i\mathbb{R}_+$&$i\mathbb{R}_+$\\
		\hline
	\end{tabular}
	\end{center}
	and
	\begin{equation*}
	\sigma=\left\{
	\begin{array}{lcl}  
	\frac{\alpha+\beta+\gamma+\delta}{2} &\mathrm{if}~\alpha+\gamma=\beta+\delta\\  
	\frac{-\alpha+\beta+\gamma-\delta}{2}+\pi &\mathrm{if}~\alpha+\beta=\gamma+\delta \\
	\frac{\alpha+\beta-\gamma-\delta}{2}+\pi &\mathrm{if}~\alpha+\delta=\beta+\gamma
	\\
	\frac{-\alpha+\beta-\gamma+\delta}{2}+\pi &\mathrm{if}~\alpha+\beta+\delta+\gamma=2\pi
	\end{array}  
	\right.
	\end{equation*}
\end{prop}

\begin{prop} \label{prop: elliptic}
	For the elliptic type, the expression of $z$ and $w$ with respected to a parameter $t$ is determined by (for square roots, both the real and imaginary parts are positive):
	\begin{equation}
	\begin{gathered}
	\sigma=\dfrac{\alpha+\beta+\gamma+\delta}{2}, \quad M=\dfrac{\sin \alpha \sin \beta \sin \gamma \sin \delta}{\sin (\sigma-\alpha) \sin (\sigma-\beta) \sin (\sigma-\gamma) \sin (\sigma-\delta)}
	\end{gathered}
	\end{equation}
	for the elliptic type $M \neq 1$, and
	\begin{equation*}
	\begin{gathered}
	M<1 \Leftrightarrow \max(\alpha,\beta,\gamma,\delta)+\min(\alpha,\beta,\gamma,\delta)<\sigma<\pi,~~\mathrm{or}~~\\\max(\alpha,\beta,\gamma,\delta)+\min(\alpha,\beta,\gamma,\delta)>\sigma>\pi \\
	M>1 \Leftrightarrow \max(\alpha,\beta,\gamma,\delta)+\min(\alpha,\beta,\gamma,\delta)>\sigma<\pi,~~\mathrm{or}~~\\\max(\alpha,\beta,\gamma,\delta)+\min(\alpha,\beta,\gamma,\delta)<\sigma>\pi
	\end{gathered}
	\end{equation*}
	\begin{enumerate} [(a)]
		\item if $M<1$, 
		\begin{equation}
			z=p\mathrm{~sn~}t, \quad w=q\mathrm{~sn}(t+t_0)
		\end{equation}
	where $\mathrm{sn}$ is the Jacobian elliptic sine function with modulus $k=\sqrt{1-M}$, and 
	\begin{equation*}
		t_0=\mathrm{dn}^{-1}\left(\sqrt{\dfrac{\sin \alpha \sin \gamma}{\sin (\sigma-\alpha) \sin (\sigma-\gamma)}}\right)
	\end{equation*}
	where $\mathrm{dn}$ is the Jacobian elliptic tangent function with modulus $k=\sqrt{1-M}$.
	\item if $M>1$, 
	\begin{equation}
	z=p\mathrm{~cn~}t, \quad w=q\mathrm{~cn}(t+t_0)
	\end{equation}
	where $\mathrm{cn}$ is the Jacobian elliptic cosine function with modulus $k=\sqrt{1-1/M}$, and 
	\begin{equation*}
		t_0=\mathrm{dn}^{-1}\left(\sqrt{\dfrac{\sin (\sigma-\alpha) \sin (\sigma-\gamma)}{\sin \alpha \sin \gamma}}\right)
	\end{equation*}
	where $\mathrm{dn}$ is the Jacobian elliptic tangent function with modulus $k=\sqrt{1-1/M}$.
	\end{enumerate}
In both cases, the other parameters are
	\begin{equation*}
	\begin{gathered}
	p=\sqrt{\dfrac{\sin \alpha \sin \delta}{\sin (\sigma-\alpha) \sin (\sigma-\delta)}-1}, \quad q=\sqrt{\dfrac{\sin \gamma \sin \delta}{\sin (\sigma-\gamma) \sin (\sigma-\delta)}-1} \\
	\end{gathered} 
	\end{equation*}
	when deriving $t_0$, the phase choice is
	\begin{center}
		\begin{tabular}{|c|c|c|c|}
			\hline  
			$t_0 \in$ &$pq \in \mathbb{R}_+$&$pq \in i\mathbb{R}_+$&$pq \in \mathbb{R}_-$\\
			\hline  
			$\sigma<\pi$& $(0, iK')$ &$(K, K+iK')$&$(2K,2K+iK')$\\
			\hline 
			$\sigma>\pi$&$(2K,2K+iK')$&$(3K,3K+iK')$&$(0, iK')$\\
			\hline
		\end{tabular}
	\end{center}
Here $K$ and $iK'$ are the quarter periods of the elliptic function determined by the modulus $k$.
	\begin{equation*}
	K=\int_{0}^{\frac{\pi}{2}}\dfrac{\dif \theta}{\sqrt{1-k^2\sin \theta}}, \quad K'=\int_{0}^{\frac{\pi}{2}}\dfrac{\dif \theta}{\sqrt{1-(1-k^2)\sin \theta}}
	\end{equation*}
\end{prop}

For further discussion, we point out a special case of a degree-4 single creased paper.

\begin{prop}
	If the sector angles satisfy:
	\begin{equation}
	\cos \alpha \cos \gamma=\cos \beta \cos \delta
	\end{equation}
	the creased paper is named to be \textit{orthodiagonal} type, which is a special elliptic type but may also degenerate to (anti)deltoids. To finish setting up notation some parameters are listed for this type.
	\begin{equation*}
	\begin{gathered}
	\lambda=\left\{
	\begin{array}{lcl}  
	\frac{\tan \delta+\tan \alpha}{\tan \delta-\tan \alpha} &\mathrm{if}~\alpha \neq \frac{\pi}{2}~~\mathrm{or}~~\delta \neq \frac{\pi}{2}  \\  
	\frac{\cos \beta+\cos \gamma}{\cos \beta-\cos \gamma} &\mathrm{if}~\alpha=\delta=\frac{\pi}{2}
	\end{array}  
	\right.
	\\
	\mu=\left\{
	\begin{array}{lcl}  
	\frac{\tan \delta+\tan \gamma}{\tan \delta-\tan \gamma} &\mathrm{if}~\gamma \neq \frac{\pi}{2}~~\mathrm{or}~~\delta \neq \frac{\pi}{2}  \\  
	\frac{\cos \beta+\cos \alpha}{\cos \beta-\cos \alpha} &\mathrm{if}~\gamma=\delta=\frac{\pi}{2}
	\end{array}  
	\right. \\
	\nu=\left\{
	\begin{array}{lcl}  
	\frac{(\lambda-1)(\mu-1)}{\cos \delta} &\mathrm{if}~\delta \neq \frac{\pi}{2} \\  
	2(\mu-1)\tan \alpha &\mathrm{if}~\delta=\gamma=\frac{\pi}{2} \\
	2(\lambda-1)\tan \gamma &\mathrm{if}~\delta=\alpha=\frac{\pi}{2}
	\end{array}  
	\right.
	\end{gathered}
	\end{equation*}
	If $\alpha=\delta$, $\beta=\gamma$, or $\alpha+\delta=\beta+\gamma=\pi$, 
	\begin{equation*}
	\begin{gathered}
	\zeta=\left\{
	\begin{array}{lcl}  
	\frac{\mu-1}{\cos \delta} &\mathrm{if}~\delta \neq \frac{\pi}{2} \\  
	2\tan \gamma &\mathrm{if}~\delta=\alpha=\frac{\pi}{2}
	\end{array}  
	\right.
	\end{gathered}
	\end{equation*}
If $\alpha=\beta$, $\gamma=\delta$, or $\alpha+\beta=\gamma+\delta=\pi$,
	\begin{equation*}
	\begin{gathered}
	\xi=\left\{
	\begin{array}{lcl}  
	\frac{\lambda-1}{\cos \delta} &\mathrm{if}~\delta \neq \frac{\pi}{2} \\  
	2\tan \alpha &\mathrm{if}~\delta=\gamma=\frac{\pi}{2}
	\end{array}  
	\right.
	\end{gathered}
\end{equation*}
\end{prop}

\section{The list of rigid-foldable Kokotsakis quadrilaterals} \label{section: s2}

There are in total 7 categories of rigid-foldable Kokotsakis quadrilaterals, respectively named based on the types of degree-4 single-vertex creased papers involved and the relation among the tangent of half of their folding angles. All the sector angles in a Kokotsakis quadrilateral are labelled $\alpha_j, \beta_j, \gamma_j, \delta_j(1 \le j \le 4)$, as shown in figure \ref{fig: kokotsakis}(a). Note that there are three concerns. First, there might be no solution for sector angles in the real field. Second, even if there is a solution in the real field, the rigid folding motion may locates in the complexified configuration space, hence the Kokotsakis quadrilateral will still be rigid in $\mathbb{R}^3$. Third, even if there is a solution in the real field and there is a rigid folding motion in $\mathbb{R}^3$, the panels may constantly self-intersect, which makes the rigid folding motion not valid. 

Specifically these categories are:

\begin{enumerate} [1.]
	\item Orthodiagonal
	\item Equimodular
	\item Conjugate Modular
	\item Linear Compounds
	\item Linearly Conjugate
	\item Chimeras
	\item Trivial
\end{enumerate}

\begin{figure} [!tb]
	\noindent \begin{centering}
		\includegraphics[width=1\linewidth]{kokotsakis_sm}
		\par \end{centering}
	
	\caption{\label{fig: kokotsakis}(a) A Kokotsakis quadrilateral (not necessarily developable). We label these sector angles as $\alpha_j, \beta_j, \gamma_j, \delta_j~(1 \le j \le 4)$. (b) A linear unit, which is half of a Kokotsakis quadrilateral with $w_1=cw_2$, $c$ is a constant. If two linear units have the same $c$, they could be stitched together and form a rigid-foldable Kokotsakis quadrilateral.}
\end{figure}

In all types above we first have

\begin{equation}
\delta_1+\delta_2+\delta_3+\delta_4=2\pi
\end{equation}

Note that all parameters used below are defined for each type of single creased paper in Section \ref{section: s1}.

\begin{enumerate}[(a)]
	\item for an (anti)isogram: $\kappa$
	\item for an (anti)deltoid: $p,q,\epsilon,\iota,\mu,\lambda$
	\item for the conic type: $p,q,t_0,\sigma$. In the following equations if referring to a conic type and $t_j~(1 \le j \le 4)$ appears, each of them means $t_0$ in proposition \ref{prop: conic}.
	\item for the elliptic type: $\sigma, M, k, p, q, t_0, K, K'$. In the following equations if referring to a elliptic type and $t_j~(1 \le j \le 4)$ appears, each of them means $t_0$ in proposition \ref{prop: elliptic}.
	\item for the orthodiagonal type, $\lambda, \mu, \nu, \zeta, \xi$.
\end{enumerate}

\begin{description}
	\item[Type 1.1 Orthodiagonal Involutive]
	\begin{equation}
	\begin{gathered}
	\delta_j ~(1 \le j \le 4) \neq \dfrac{\pi}{2} \\
	\left\{
	\begin{array}{ccl}
	\alpha_1+\alpha_2=\alpha_3+\alpha_4=\delta_1+\delta_2=\pi, \quad \dfrac{\tan{\gamma_1}}{\tan{\gamma_4}}=\dfrac{\tan{\gamma_2}}{\tan{\gamma_3}}=\dfrac{\tan{\delta_1}}{\tan{\delta_4}} \\
	\mathrm{or} \\
	\gamma_1+\gamma_4=\gamma_2+\gamma_3=\delta_1+\delta_4=\pi, \quad 
	\dfrac{\tan{\alpha_1}}{\tan{\alpha_2}}=\dfrac{\tan{\alpha_4}}{\tan{\alpha_3}}=\dfrac{\tan{\delta_1}}{\tan{\delta_2}}
	\end{array}  
	\right. \\
	\cos \alpha_j \cos \gamma_j=\cos \beta_j \cos \delta_j ~(1 \le j \le 4)
	\end{gathered}
	\end{equation}
	\item[Type 1.2 Orthodiagonal Antiinvolutive]
	\begin{equation}
	\begin{gathered}
	\alpha_j \pm \beta_j \pm \gamma_j \pm \delta_j \neq 0~(\mathrm{mod}~2\pi, 1 \le j \le 4) \\
	\cos \alpha_j \cos \gamma_j=\cos \beta_j \cos \delta_j ~(1 \le j \le 4) \\
	\lambda_1=-\lambda_2, \quad \lambda_3=-\lambda_4, \quad \mu_1=-\mu_4, \quad \mu_2=-\mu_3 \\
	\dfrac{\nu_1^2}{\lambda_1\mu_1}=\dfrac{\nu_3^2}{\lambda_3\mu_3}, \quad \dfrac{\nu_2^2}{\lambda_2\mu_2}=\dfrac{\nu_3^4}{\lambda_4\mu_4}, \quad \dfrac{\nu_1^2}{\lambda_1\mu_1}+\dfrac{\nu_2^2}{\lambda_2\mu_2}=1
	\end{gathered}
	\end{equation}
	\item[Type 2.1 Elliptic Equimodular]
	\begin{equation}
	\begin{gathered}
	\alpha_j \pm \beta_j \pm \gamma_j \pm \delta_j \neq 0~(\mathrm{mod}~2\pi, 1 \le j \le 4) \\
	M_1=M_2=M_3=M_4 \\
	p_1=p_2, \quad p_3=p_4, \quad q_1=q_4, \quad q_2=q_3 \\
	t_1 \pm t_2 \pm t_3 \pm t_4 \in \left\{
	\begin{array}{ccl}  
	4K\mathbb{Z}+2iK'\mathbb{Z} & \mathrm{if}~ M_1<1 \\  
	4K\mathbb{Z}+2(K+iK')\mathbb{Z} & \mathrm{if}~M_1>1
	\end{array}  
	\right. \\
	\end{gathered}
	\end{equation}
	Any combination of signs is sufficient for the last condition.
	\item[Type 2.2 Conic Equimodular]
	\begin{equation}
	\begin{gathered}
	\alpha_j + \gamma_j = \beta_j + \delta_j~(1 \le j \le 4) \\
	p_1=p_2, \quad p_3=p_4, \quad q_1=q_4, \quad q_2=q_3 \\
	t_1 \pm t_2 \pm t_3 \pm t_4 \in 2\pi \mathbb{Z}
	\end{gathered}
	\end{equation}
	Any combination of signs is sufficient for the last condition.
	\item[Type 3.1 First Elliptic Conjugative Modular]
	\begin{equation}
	\begin{gathered}
	M_j>1 ~(1 \le j \le 4) \\
	M_1=M_3, \quad M_2=M_4, \quad  \dfrac{1}{M_1}+\dfrac{1}{M_2}=1 \\
	\dfrac{p_1}{p_2}=\pm i\sqrt{M_1-1}, \quad \dfrac{p_3}{p_4}=\pm i\sqrt{M_1-1} \\ \dfrac{q_3}{q_2}=\pm i\sqrt{M_1-1}, \quad
	\dfrac{q_1}{q_4}=\pm i\sqrt{M_1-1} \\
	t_1\pm it_2 \pm t_3 \pm it_4=\left\{
	\begin{array}{ccl}  
	4K\mathbb{Z}+2(K+iK')\mathbb{Z} &
	\begin{array}{ccl}
	\mathrm{if}~\frac{p_1}{p_2}=\frac{q_3}{q_2},~\frac{p_3}{p_4}=\frac{q_3}{q_4} \\ \mathrm{or}~~\frac{p_1}{p_2}=-\frac{q_3}{q_2},~\frac{p_3}{p_4}=-\frac{q_3}{q_4}
	\end{array} \\
	2K+4K\mathbb{Z}+2(K+iK')\mathbb{Z} &
	\begin{array}{ccl}
	\mathrm{if}~\frac{p_1}{p_2}=\frac{q_3}{q_2},~\frac{p_3}{p_4}=-\frac{q_3}{q_4} \\ \mathrm{or}~~\frac{p_1}{p_2}=-\frac{q_3}{q_2},~\frac{p_3}{p_4}=\frac{q_3}{q_4}
	\end{array}
	\end{array}  
	\right.
	\end{gathered}
	\end{equation}
	Any combination of signs is sufficient for the last condition, which also implies $t_1\pm t_3 \in (K+iK')\mathbb{Z}$ and $t_2 \pm t_4 \in (K+iK')\mathbb{Z}$.
	\item[Type 3.2 Second Elliptic Conjugative Modular]
	\begin{equation}
	\begin{gathered}
	M_j>1 ~(1 \le j \le 4) \\
	M_1=M_4, \quad M_2=M_3, \quad  \dfrac{1}{M_1}+\dfrac{1}{M_2}=1 \\
	\dfrac{p_1}{p_2}=\pm i\sqrt{M_1-1}, \quad \dfrac{p_4}{p_3}=\pm i\sqrt{M_1-1} \\ q_2=q_3, \quad q_1=q_4 \\
	t_1 \pm it_2 \pm t_3 \pm it_4=\left\{
	\begin{array}{ccl}  
	4K\mathbb{Z}+2(K+iK')\mathbb{Z} &
	\mathrm{if}~\frac{p_1}{p_2}=\frac{p_4}{p_3}\\
	2K+4K\mathbb{Z}+2(K+iK')\mathbb{Z} & \mathrm{if}~\frac{p_1}{p_2}=-\frac{p_4}{p_3}
	\end{array}  
	\right.
	\end{gathered}
	\end{equation}
	Any combination of signs is sufficient for the last condition, which also implies $t_1+t_4 \in (K+iK')\mathbb{Z}$ and $t_2+t_3 \in (K+iK')\mathbb{Z}$.
	\item[Type 4 Linear Compounds]
	This type of rigid-foldable Kokotsakis quadrilateral is formed by stitching of two \textit{linear units}, each of which is half of a Kokotsakis quadrilateral and is graphically explained in figure \ref{fig: kokotsakis}. Since $w_1=cw_2$, if two linear units have the same $c$, they could be stitched together and form a rigid-foldable Kokotsakis quadrilateral. Below we list the 5 types of linear units.
	\begin{description}
		\item [Type 4.1 (Anti)isogram]
		\begin{equation}
		\begin{gathered}
		\left\{
		\begin{array}{ccl} 
		\alpha_1=\gamma_1, \quad \beta_1=\delta_1, \quad \alpha_2=\gamma_2, \quad \beta_2=\delta_2 & \mathrm{(S2.8a)}\\
		\mathrm{or} \\ 
		\alpha_1+\gamma_1=\beta_1+\delta_1=\pi, \quad \alpha_2+\gamma_2=\beta_2+\delta_2=\pi & \mathrm{(S2.8b)}
		\end{array}
		\right. \\
		c=\dfrac{\kappa_2}{\kappa_1}
		\end{gathered}
		\end{equation}
		\item [Type 4.2 Lateral coupled (anti)deltoid]
		\begin{equation}
		\begin{gathered}
		\left\{
		\begin{array}{ccl} 
		\alpha_1=\beta_1, \quad \gamma_1=\delta_1, \quad \alpha_2=\beta_2, \quad \gamma_2=\delta_2 & \mathrm{(S2.9a)} \\
		\mathrm{or} \\ 
		\alpha_1+\beta_1=\gamma_1+\delta_1=\pi, \quad \alpha_2+\beta_2=\gamma_2+\delta_2=\pi & \mathrm{(S2.9b)}  
		\end{array}
		\right. \\
		\lambda_1=\lambda_2 \\
		c=\left\{
		\begin{array}{ccl} 
		\frac{\xi_2}{\xi_1} & \mathrm{if}~\mathrm{(S2.9a)} \\
		\frac{\xi_1}{\xi_2} & \mathrm{if}~\mathrm{(S2.9b)}
		\end{array}
		\right. \\
		\end{gathered}
		\end{equation}
		\item [Type 4.3 Frontal coupled (anti)deltoid]
		\begin{equation}
		\begin{gathered}
		\left\{
		\begin{array}{ccl} 
		\alpha_1=\delta_1, \quad \beta_1=\gamma_1, \quad \alpha_2=\delta_2, \quad \beta_2=\gamma_2 & \mathrm{(S2.10a)} \\
		\mathrm{or} \\ 
		\alpha_1+\delta_1=\beta_1+\gamma_1=\pi, \quad \alpha_2+\delta_2=\beta_2+\gamma_2=\pi & \mathrm{(S2.10b)}
		\end{array}
		\right. \\
		p_1=p_2 \\
		c=\dfrac{\epsilon_1}{\epsilon_2}\sqrt{\dfrac{\mu_1}{\mu_2}}
		\end{gathered}
		\end{equation}
		\item [Type 4.4 Conic]
		\begin{equation}
		\begin{gathered}
		\left\{
		\begin{array}{ccl} 
		\alpha_1+\gamma_1=\beta_1+\delta_1, \quad \alpha_2+\gamma_2=\beta_2+\delta_2 & \mathrm{(S2.11a)} \\
		\mathrm{or} \\ 
		\alpha_1+\beta_1+\gamma_1+\delta_1=2\pi, \quad \alpha_2+\beta_2+\gamma_2+\delta_2=2\pi & \mathrm{(S2.11b)}
		\end{array}
		\right. \\
		\dfrac{\sin \alpha_1}{\sin \beta_1}=\dfrac{\sin \alpha_2}{\sin \beta_2}, \quad \dfrac{\sin \gamma_1}{\sin \delta_1}=\dfrac{\sin \gamma_2}{\sin \delta_2} \\
		c=\left\{
		\begin{array}{ccl} 
		\frac{q_1}{q_2} & \mathrm{if}~\mathrm{(S2.9a)}~\mathrm{and}~(\alpha_1+\beta_1+\gamma_1+\delta_1-2\pi)(\alpha_2+\beta_2+\gamma_2+\delta_2-2\pi)>0 \\
		-\frac{q_1}{q_2} & \mathrm{if}~\mathrm{(S2.9a)}~\mathrm{and}~(\alpha_1+\beta_1+\gamma_1+\delta_1-2\pi)(\alpha_2+\beta_2+\gamma_2+\delta_2-2\pi)<0 \\
		\frac{q_2}{q_1} & \mathrm{if}~\mathrm{(S2.9b)}~\mathrm{and}~(-\alpha_1+\beta_1-\gamma_1+\delta_1)(-\alpha_2+\beta_2-\gamma_2+\delta_2)>0 \\
		-\frac{q_2}{q_1} & \mathrm{if}~\mathrm{(S2.9b)}~\mathrm{and}~(-\alpha_1+\beta_1-\gamma_1+\delta_1)(-\alpha_2+\beta_2-\gamma_2+\delta_2)<0
		\end{array}
		\right. \\
		\end{gathered}
		\end{equation}
		\item [Type 4.5 Elliptic]
		\begin{equation}
		\begin{gathered}
		\alpha_j \pm \beta_j \pm \gamma_j \pm \delta_j \neq 0~(\mathrm{mod}~2\pi, 1 \le j \le 4) \\
		M_1=M_2, \quad p_1=p_2 \\
		\dfrac{\sin \alpha_1\sin \gamma_1}{\sin (\sigma_1-\alpha_1)\sin (\sigma_1-\gamma_1)}=\dfrac{\sin \alpha_2\sin \gamma_2}{\sin (\sigma_2-\alpha_2)\sin (\sigma_2-\gamma_2)} \\
		c=\left\{
		\begin{array}{ccl} 
		\frac{q_1}{q_2} & \mathrm{if}~(\alpha_1+\beta_1+\gamma_1+\delta_1-2\pi)(\alpha_2+\beta_2+\gamma_2+\delta_2-2\pi)>0 \\
		-\frac{q_1}{q_2} & \mathrm{if}~(\alpha_1+\beta_1+\gamma_1+\delta_1-2\pi)(\alpha_2+\beta_2+\gamma_2+\delta_2-2\pi)<0 
		\end{array}
		\right. \\
		\end{gathered}
		\end{equation}
	\end{description}
	\item[Type 5.1 Linearly conjugated antideltoids]
	\begin{equation}
	\begin{gathered}
	\alpha_2+\gamma_2=\beta_2+\delta_2=\pi, \quad \alpha_4+\gamma_4=\beta_4+\delta_4=\pi \\
	\alpha_1+\delta_1=\beta_1+\gamma_1=\pi, \quad \alpha_3+\beta_3=\gamma_3+\delta_3=\pi \\
	\kappa_4^2 \mu_1 = \lambda_3, \quad \kappa_4 \zeta_1=\kappa_2 \xi_3
	\end{gathered}
	\end{equation}
	\item[Type 5.2 Linearly conjugated conics]
	\begin{equation}
	\begin{gathered}
	\alpha_2+\gamma_2=\beta_2+\delta_2=\pi, \quad \alpha_4+\gamma_4=\beta_4+\delta_4=\pi \\
	\alpha_1+\beta_1+\gamma_1+\delta_1=2\pi, \quad \alpha_3+\beta_3+\gamma_3+\delta_3=2\pi \\
	q_3=|\kappa_2|p_1, \quad q_1=|\kappa_4|p_3, \quad t_1=\left\{
	\begin{array}{ccl} 
	t_3 & \mathrm{if}~\kappa_2\kappa_4>0 \\
	t_3+\pi & \mathrm{if}~\kappa_2\kappa_4<0 
	\end{array}
	\right. \\
	\end{gathered}
	\end{equation}
	\item[Type 5.3 Linearly conjugated elliptics]
	\begin{equation}
	\begin{gathered}
	\alpha_2+\gamma_2=\beta_2+\delta_2=\pi, \quad \alpha_4+\gamma_4=\beta_4+\delta_4=\pi \\
	\alpha_1 \pm \beta_1 \pm \gamma_1 \pm \delta_1 \neq 0~(\mathrm{mod}~2\pi), \quad \alpha_3 \pm \beta_3 \pm \gamma_3 \pm \delta_3 \neq 0~(\mathrm{mod}~2\pi) \\
	M_1=M_3 \\
	p_1=|\kappa_2|q_3, \quad p_3=|\kappa_4|q_1, \quad t_1=\left\{
	\begin{array}{ccl} 
	t_3 & \mathrm{if}~\kappa_2\kappa_4>0 \\
	t_3+2K_1 & \mathrm{if}~\kappa_2\kappa_4<0 
	\end{array}
	\right. \\
	\end{gathered}
	\end{equation}
	\item[Type 6.1 Conic-deltoids]
	\begin{equation}
	\begin{gathered}
	\alpha_2+\beta_2+\gamma_2+\delta_2=2\pi, \quad \alpha_3+\beta_3+\gamma_3+\delta_3=2\pi \\
	\alpha_1+\delta_1=\beta_1+\gamma_1=\pi \quad \alpha_4+\delta_4=\beta_4+\gamma_4=\pi \\
	q_2=q_3, \quad p_1=p_2, \quad p_3=p_4 \\
	i\log \left(\dfrac{\epsilon_1}{\epsilon_4}\sqrt{\dfrac{\mu_1}{\mu_4}}\right)=\pm t_1 \pm t_2
	\end{gathered}
	\end{equation}
	Any combination of signs is sufficient for the last condition.
	\item[Type 6.2 First orthodiagonal-isogram]
	\begin{equation}
	\begin{gathered}
	\cos \alpha_1 \cos \gamma_1= \cos \beta_1 \cos \delta_1 \\ 
	\alpha_2+\beta_2=\gamma_2+\delta_2=\pi, \quad \lambda_1=\lambda_2 \\
	\alpha_4+\delta_4=\beta_4+\gamma_4=\pi, \quad \mu_1=\mu_4 \\
	\alpha_3=\gamma_3, \quad \beta_3=\delta_3 \\
	\nu_1=\kappa_3 \xi_2 \zeta_4
	\end{gathered}
	\end{equation}
	\item[Type 6.3 Second orthodiagonal-isogram]
	\begin{equation}
	\begin{gathered}
	\cos \alpha_1 \cos \gamma_1= \cos \beta_1 \cos \delta_1 \\ 
	\alpha_2+\beta_2=\gamma_2+\delta_2=\pi, \quad \lambda_1=\lambda_2 \\
	\alpha_3=\beta_3, \quad \gamma_3=\delta_3, \quad \alpha_4+\gamma_4=\beta_4+\delta_4 \\
	\kappa_4^2 \mu_1 =\lambda_3 \quad \kappa_4 \nu_1 =\xi_2 \xi_3
	\end{gathered}
	\end{equation}
	\item[Type 6.4 Conic-isogram]
	\begin{equation}
	\begin{gathered}
	\alpha_2+\beta_2+\gamma_2+\delta_2=2\pi \\ 
	\alpha_1+\delta_1=\beta_1+\gamma_1=\pi, \quad p_1=p_2 \\
	\alpha_3+\beta_3=\gamma_3+\delta_3=\pi, \quad q_3=q_2 \\
	\alpha_4=\gamma_4, \quad \beta_4=\delta_4 \\
	\kappa_4=\dfrac{
	q_3 \xi_3}{2i\epsilon_1\sqrt{-\mu_1}e^{\pm it_2}}
	\end{gathered}
	\end{equation}
	Any choice of sign is sufficient for the last condition.
	\item[Type 6.5 Conic-antiisogram]
	\begin{equation}
	\begin{gathered}
	\alpha_2+\beta_2+\gamma_2+\delta_2=2\pi \\ 
	\alpha_1+\delta_1=\beta_1+\gamma_1=\pi, \quad p_1=p_2 \\
	\alpha_3+\beta_3=\gamma_3+\delta_3=\pi, \quad q_3=q_2 \\
	\alpha_4+\gamma_4=\beta_4+\delta_4=\pi \\
	\kappa_4=\dfrac{2
	}{iq_3 \xi_3\epsilon_1\sqrt{-\mu_1}e^{\pm it_2}}
	\end{gathered}
	\end{equation}
	Any choice of sign is sufficient for the last condition.
	\item[Type 6.6 (anti)deltoid-elliptic]
	\begin{equation}
	\begin{gathered}
	\alpha_1=\delta_1, \quad \beta_1=\gamma_1 \\ 
	\alpha_2+\delta_2=\beta_2+\gamma_2=\pi, \quad \lambda_1=\lambda_2 \\
	p_1 \neq p_2 \\
	\alpha_3 \pm \beta_3 \pm \gamma_3 \pm \delta_3 \neq 0~(\mathrm{mod}~2\pi), \quad \alpha_4 \pm \beta_4 \pm \gamma_4 \pm \delta_4 \neq 0~(\mathrm{mod}~2\pi) \\
	M_3=M_4<1, \quad p_3=p_4 \\
	\pm t_3 \pm t_4 = lK+i\dfrac{K'}{2} \\
	\end{gathered}
	\end{equation}
	where
	\begin{equation*}
	\begin{gathered}
	\left\{
	\begin{array}{ccl} 
	\mu_1=\dfrac{p_1^2}{k}, \quad \mu_2=\dfrac{p_2^2}{k}, \quad \zeta_1\zeta_2=\dfrac{2(1+k)}{k\sqrt{k}}p_1p_2& \mathrm{if}~l=0 \\
	\mu_1=-\dfrac{p_1^2}{k}, \quad \mu_2=-\dfrac{p_2^2}{k}, \quad \zeta_1\zeta_2=\dfrac{2i(1-k)}{k\sqrt{k}}p_1p_2& \mathrm{if}~l=1 \\
	\mu_1=\dfrac{p_1^2}{k}, \quad \mu_2=\dfrac{p_2^2}{k}, \quad \zeta_1\zeta_2=-\dfrac{2(1+k)}{k\sqrt{k}}p_1p_2& \mathrm{if}~l=2 \\
	\mu_1=-\dfrac{p_1^2}{k}, \quad \mu_2=-\dfrac{p_2^2}{k}, \quad \zeta_1\zeta_2=-\dfrac{2i(1-k)}{k\sqrt{k}}p_1p_2& \mathrm{if}~l=3
	\end{array}
	\right. \\
	\end{gathered}
	\end{equation*}
	Any combination of sign is sufficient for the last condition.
	\item[Type 6.7 isogram-deltoid-conic]
	\begin{equation}
	\begin{gathered}
	\alpha_3 +\beta_3+\gamma_3+\delta_3=2\pi \\ 
	\alpha_4+\delta_4=\beta_4+\gamma_4=\pi, \quad p_3=p_4 \\
	\alpha_1+\delta_1=\beta_1+\gamma_1=\pi \\
	\alpha_2+\gamma_2=\beta_2+\delta_2=\pi \\
	\left\{
	\begin{array}{ccl}	
	\mu_1=\mu_4e^{2it_3}, \quad \zeta_1=\dfrac{2i\kappa_2\epsilon_4\sqrt{-\mu_4}e^{it_3}}{q_2} \\
	\mathrm{or} \\
	\mu_1=\mu_4e^{-2it_3}, \quad \zeta_1=\dfrac{2i\kappa_2\epsilon_4\sqrt{-\mu_4}e^{-it_3}}{q_2} \\
	\end{array} 
	\right.
	\end{gathered}
	\end{equation}
	\item[Type 6.8 Three conics with isogram]
	\begin{equation}
	\begin{gathered}
	\alpha_j +\beta_j+\gamma_j+\delta_j=2\pi ~(j=1,3,4)\\  
	p_3=p_4, \quad q_1=q_4\\
	\alpha_2+\gamma_2=\beta_2+\delta_2=\pi \\
	q_3=|\kappa_2| p_1, \quad t_1 \pm t_3 \pm t_4 = \left\{
	\begin{array}{ccl}	
	0,~\mathrm{if}~\kappa_2>0 \\
	\pi,~\mathrm{if}~\kappa_2<0 \\
	\end{array} 
	\right.
	\end{gathered}
	\end{equation}
	Any combination of signs is sufficient for the last condition.
	\item[Type 6.9 Three elliptics with isogram]
	\begin{equation}
	\begin{gathered}
	\alpha_j \pm \beta_j \pm \gamma_j \pm \delta_j \neq 0~(\mathrm{mod}~2\pi,~ j=1,3,4) \\ 
	M_1=M_3=M_4 \\
	p_3=p_4, \quad q_1=q_4\\
	\alpha_2+\gamma_2=\beta_2+\delta_2=\pi \\
	p_1=|\kappa_2| q_3, \quad t_1 \pm t_3 \pm t_4 = \left\{
	\begin{array}{ccl}	
	0,~\mathrm{if}~\kappa_2>0 \\
	2K,~\mathrm{if}~\kappa_2<0 \\
	\end{array} 
	\right.
	\end{gathered}
	\end{equation}
	Any combination of signs is sufficient for the last condition.
	\item[Type 6.10 orthodiagonal-(anti)deltoid-conic]
	\begin{equation}
	\begin{gathered}
	\cos \alpha_1 \cos \gamma_1= \cos \beta_1 \cos \delta_1 \\ 
	\alpha_2+\beta_2=\gamma_2+\delta_2=\pi, \quad \lambda_1=\lambda_2 \\
	\alpha_3+\gamma_3=\beta_3+\delta_3 \\
	\alpha_4=\delta_4, \quad \beta_4=\gamma_4, \quad p_3=p_4 \\
		\left\{
	\begin{array}{ccl}	
	\mu_1=\mu_4e^{2it_3}, \quad \nu_1=\dfrac{2i\xi_2\epsilon_4\sqrt{-\mu_4}e^{it_3}}{q_3} \\
	\mathrm{or} \\
	\mu_1=\mu_4e^{-2it_3}, \quad \nu_1=\dfrac{2i\xi_2\epsilon_4\sqrt{-\mu_4}e^{-it_3}}{q_3} \\
	\end{array} 
	\right.
	\end{gathered}
	\end{equation}
	\item[Type 7 Trivial] A rigid-foldable Kokotsakis quadrilateral is called \textit{trivial} if at least one of its folding angles keeps constant in all branches of its rigid folding motion. Those trivial rigid-foldable Kokotsakis quadrilaterals are classified into the following 4 types.
	\begin{description}
	\item[Type 7.1 First Trivial]
		\begin{equation}
		\begin{gathered}
		\alpha_1+\delta_1=\beta_1+\gamma_1=\pi \\
		\alpha_2+\delta_2=\beta_2+\gamma_2=\pi \\
		\end{gathered}
		\end{equation}
		\item[Type 7.2 Second Trivial]
		\begin{equation}
		\begin{gathered}
		\alpha_1+\delta_1=\beta_1+\gamma_1=\pi \\
		\alpha_3+\delta_3=\beta_3+\gamma_3=\pi \\
		\end{gathered}
		\end{equation}
		\item[Type 7.3 Third Trivial]
		\begin{equation}
		\begin{gathered}
		\alpha_1+\delta_1=\beta_1+\gamma_1=\pi \\
		\alpha_4+\delta_4=\beta_4+\gamma_4=\pi \\
		\end{gathered}
		\end{equation}
		\item[Type 7.4 Fourth Trivial]
		\begin{equation}
		\begin{gathered}
		\alpha_1+\delta_1=\beta_1+\gamma_1=\pi \\
		\alpha_2+\delta_2=\beta_2+\gamma_2=\pi \\
		\alpha_3+\delta_3=\beta_3+\gamma_3=\pi \\
		\alpha_4+\delta_4=\beta_4+\gamma_4=\pi \\
		\end{gathered}
		\end{equation}
	\end{description}
\end{description}

Note that in a few places, we have revised some minor typographical errors in \cite{izmestiev_classification_2016}, which are in Types 2.1, 5.2, 6.6, 6.7, 6.10.

\bibliographystyle{unsrt}